\newtheorem*{thm*}{Theorem}
\newtheorem*{conj*}{Conjecture}
\newtheorem{theorem}{Theorem}[section]
\newtheorem{lemma}{Lemma}[section]
\newtheorem*{rmk}{Remark}
\newcommand{\Z}{\mathbb{Z}}
\newcommand{\R}{\mathbb{R}}
\newcommand{\C}{\mathbb{C}}
\newcommand{\N}{\mathbb{N}}
\newcommand{\pl}{\mathrm{PL}}
\newcommand{\D}{\mathcal{D}}
\newcommand{\XX}{\mathcal{X}}
\newcommand{\YY}{\mathcal{Y}}
\newcommand{\ZZ}{\mathcal{Z}}
\newcommand{\uu}{\underline}
\numberwithin{equation}{section}
\begin{document}

\subjclass[2022]{11, 11C, 11P, 11P82, 11Y, 11B, 05}
\keywords{Plane partition, Higher Tur\'{a}n inequality, Jensen polynomial, Taylor expansion, Hankel determinant}

\title{HIGHER TUR\'{A}N INEQUALITIES FOR THE PLANE PARTITION FUNCTION}
\author{Badri Vishal Pandey}
\address{Department of Mathematics, University of Virginia, Charlottesville, VA 22904}
\email{bp3aq@virginia.edu}

\begin{abstract}
    Here we study the roots of the doubly infinite family of Jensen polynomials $J_{\pl}^{d,n}(x)$ associated to MacMahon's plane partition function $\pl(n)$. Recently, Ono, Pujahari, and Rolen \cite{ono2022turan} proved that $\pl(n)$ is log-concave for all $n\geq 12$, which is equivalent to the polynomials $J_{\pl}^{2,n}(x)$ having real roots. Moreover, they proved, for each $d\geq 2$, that the $J_{\pl}^{d,n}(x)$ have all real roots for sufficiently large $n$. Here we make their result effective. Namely, if $N_{\pl}(d)$ is the minimal integer such that $J_{\pl}^{d,n}(x)$ has all real roots for all $n\geq N_{\pl}(d)$, then we show that 
    $$N_{\pl}(d)\leq 279928\cdot d(d-1)\cdot \left(6 d^3\cdot (22.2)^{\frac{3(d-1)}{2}}\right)^{2d} e^{\frac{\Gamma(2d^2)}{(2\pi)^{2d+2}}}  .$$
Moreover, using the ideas that led to the above inequality, we explicitly prove that $N_{\pl}(3)=26, N_{\pl}(4)=46, N_{\pl}(5)=73, N_{\pl}(6)=102$ and $N_{\pl}(7)=136$.
\end{abstract}
\maketitle
\section{Introduction and Statement of Results}
Given a sequence $\alpha:\N\to \R$ and positive numbers $d$ and $n,$ the associated \emph{Jensen polynomial of degree $d$ and shift $n$} is defined as
\begin{equation}
	J_{\alpha}^{d,n}(x):=\sum_{j=0}^d \binom{d}{j}\alpha(n+j) x^j.
\end{equation}
Notice that in the case of degree $d=2$, we have that
\[
	J_{\alpha}^{2,n-1}(x)=\alpha(n-1)+2\alpha(n)x+\alpha(n+1)x^2,
\]
whose roots are 
\[
	\dfrac{-\alpha(n)\pm \sqrt{\alpha(n)^2-\alpha(n-1)\alpha(n+1)}}{\alpha(n+1)}.
\]
In particular, $\alpha$ is log-concave at $n$ if and only if the roots of $J_{\alpha}^{2,n-1}(x)$ are real.

More generally, a polynomial with real coefficients is called \emph{hyperbolic} if all of its zeros are real. And so hyperbolicity of degree 2 Jensen polynomials is equivalent to the log-concavity of the associated sequence. The significance of hyperbolicity for higher degrees was recognized by the works of Jensen and P\'{o}lya in connection to Riemann hypothesis. Building on the work of Jensen, P\'{o}lya \cite{polya1927} proved that the Riemann hypothesis is equivalent to the hyperbolicity of all Jensen polynomials for the Taylor coefficients of the Riemann Xi-function at $s=1/2$. 

Generalizing log-concavity, we have Tur\'{a}n inequalities, which are of significant interest in combinatorics. Just like the log-concavity is equivalent to the hyperbolicity of $J_{\alpha}^{2,n}(x)$, the higher Tur\'{a}n inequalities are equivalent to the hyperbolicity of higher degree Jensen polynomials. 

A \emph{partition} $\lambda$ of positive integer $n$ is a finite non-increasing sequence $\lambda = (\lambda_1\geq\lambda_2\geq\cdots\geq\lambda_r>0)$ such that $\sum \lambda_i=n$. The \emph{partition function} $p(n)$ counts the number of partitions of $n$. DeSalvo and Pak \cite{DeSalvo2015} showed that the sequence $\{p(n)\}$ is log-concave i.e. $J_p^{2,n}(x)$ is hyperbolic for all $n\geq 25$. Chen, Jia and Wang \cite{chen2019} proved that $J^{3,n}_p(x)$ is hyperbolic for $n \geq 94$, and conjectured, for every degree $d\geq 1$, that there is a minimal integer $N_p(d)$ such that $J^{d,n}_p(x)$ is hyperbolic for all $n\geq N_p(d)$. Griffin et. al. \cite{griffin2019} proved their conjecture by showing that Jensen polynomials associated to partition function of each degree are hyperbolic for all sufficiently large shift $n$. Larson and Wagner \cite{larson2019} proved an effective form of this theorem by giving a upper bound for $N_p(d)$. Namely, they showed that $N_p(d)\leq (3d)^{24d}(50d)^{3d^2}$. Extending beyond the work of Chen et. al., they also proved that $N_p(4)=206$ and $N_p(5)=381$.

The work above on $p(n)$ is one case of a wider body of problems related to the Tur\'{a}n inequalities in the theory of partitions. More recently, inspired by a conjecture of Heim, Neuhauser and Tr\"{o}ger \cite{heim2021}, Ono, Pujahari and Rolen \cite{ono2022turan} investigated the 2-dimensional analog of $p(n)$ which is the {\it plane partition function} (for background, see references by Andrews \cite{andrews_1984} and Stanley \cite{stanley1989}). A \emph{plane partition} of size $n$ is an array of non-negative integers $\pi:=(\pi_{i,j})$ such that $\sum_{i,j} \pi_{i,j}=n$, in which the rows and columns are non-increasing. Below is a 3-d rendering of a plane partition for $n=30.$
\begin{figure}[H]
	\begin{center}
    \begin{tikzpicture}[scale=0.7]
      \draw (0,0) -- (2,0) -- (2,1) -- (3,1) -- (3,2) -- (4,2) -- (4,3) -- (0,3) -- (0,0);
      \draw (1,0) -- (1,3);
      \draw (2,1) -- (2,3);
      \draw (3,2) -- (3,3);
      \draw (0,1) -- (2,1);
      \draw (0,2) -- (3,2);

      \node at (0.5,0.5) {\Large $3$};
      \node at (0.5,1.5) {\Large $4$};
      \node at (0.5,2.5) {\Large $5$};
      \node at (1.5,0.5) {\Large $1$};
      \node at (1.5,1.5) {\Large $3$};
      \node at (1.5,2.5) {\Large $5$};
      \node at (2.5,1.5) {\Large $2$};
      \node at (2.5,2.5) {\Large $4$};
      \node at (3.5,2.5) {\Large $3$};

      \draw[->, very thick] (5.5,1.5) -> (6.5,1.5);

      \node[inner sep=0pt] (boxes) at (10,1.5)
{\includegraphics[scale=0.5]{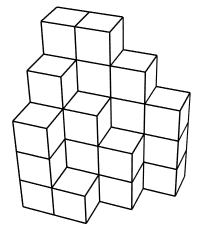}};
\end{tikzpicture}
\end{center}
\caption{\label{fig:partition-for-n=30}A plane partition for $n=30$}
\end{figure}

The plane partition function $\pl(n)$ counts the number of plane partitions of size $n$. MacMahon \cite{macmahon2004} proved that
\[
f(x)=\sum_{n=0}^{\infty}\pl(n) x^n:=\prod_{n=1}^{\infty}\dfrac{1}{(1-x^n)^n}=1+x+3x^2+6x^3+13x^4+24x^5+48x^6+\cdots.
\]
This function is of great importance in physics. It appears prominently in connection with the enumeration of small black holes in string theory, as $f(x)$ is the generating function (for example, see Appendix E of \cite{Dabholkar_2005}) for the number of BPS bound states between a $D6$ brane and $D0$ branes on $\C^3.$

Heim, Neuhauser and Tr\"{o}ger \cite{heim2021} undertook the study of the plane partitions in analogy with the hyperbolicity results of the partition function $p(n)$. They proved many inequalities satisfied by $\pl(n)$ including proving that $\pl(n)$ is log-concave for sufficiently large $n$. They also conjectured the bound to be $12$. Ono, Pujahari and Rolen \cite{ono2022turan} proved this conjecture. In addition, they also proved that for each degree $d$, the Jensen polynomials are hyperbolic for sufficiently large shift $n$. To prove this, they gave a strong asymptotic formula for the plane partition function derived using Wright's flexible circle method \cite{wright1931}. This asymptotic formula satisfies conditions required by Theorems 3 and 6 of \cite{griffin2019}, which implies that the limiting behavior of $J_{\pl}^{d,n}(x)$ as $n\to\infty$ can be modeled by \emph{Hermite polynomials} which are known to be hyperbolic. This proves their theorem. 

Here we make their result effective. More precisely, for any $d$, suppose that $N_{\pl}(d)$ is the minimal integer for which every Jensen polynomials of degree $d$ are hyperbolic for all shifts $n\geq N_{\pl}(d).$ Then we give an upper bound on $N_{\pl}(d)$.

\begin{theorem}\label{thm:main_theorem_general_case}
	For a positive integer $d\geq 4$, we have 
	$$ N_{\pl}(d)\leq 279928\cdot d(d-1)\cdot \left(6 d^3\cdot (22.2)^{\frac{3(d-1)}{2}}\right)^{2d} e^{\frac{\Gamma(2d^2)}{(2\pi)^{2d+2}}} .$$
\end{theorem}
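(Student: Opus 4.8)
The plan is to follow the Hermite-polynomial method of Griffin, Ono, Rolen, and Zagier \cite{griffin2019}---the same machinery invoked by Ono, Pujahari, and Rolen \cite{ono2022turan}---but to render every estimate effective, in the spirit of Larson and Wagner's treatment \cite{larson2019} of $p(n)$. The guiding principle is that, after an appropriate affine change of variable, the renormalized Jensen polynomial attached to $\pl(n)$ converges coefficientwise to the degree-$d$ Hermite polynomial $H_d(X)$; since $H_d$ has $d$ distinct real zeros, any sufficiently small perturbation of it remains hyperbolic. The effective bound will therefore emerge from balancing two quantities: an upper bound for the coefficientwise distance between the renormalized $J_{\pl}^{d,n}$ and $H_d$, which shrinks as $n$ grows, against a lower bound for the perturbation that $H_d$ can absorb while staying real-rooted. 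The threshold $N_{\pl}(d)$ is the least $n$ at which the first quantity drops below the second.

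First I would fix a fully explicit asymptotic expansion for $\pl(n)$. Beginning from MacMahon's product and Wright's circle method as carried out in \cite{ono2022turan, wright1931}, I would record an expansion of the shape $\log \pl(n) = \kappa\, n^{2/3} + (\text{a polynomial in } n^{1/3}) + \varepsilon(n)$, where $\kappa$ is an explicit constant built from $\zeta(3)$ and $\varepsilon(n)$ is an explicit error term tending to zero. Taking Taylor expansions, I would then express $\log\!\bigl(\pl(n+j)/\pl(n)\bigr)$, uniformly for $0\le j\le d$, as a polynomial in $j$ plus a controlled error; completing the square in this expansion isolates the scaling parameter $\delta(n)$, which here decays like a fixed power of $n^{-1/3}$, reflecting the $n^{2/3}$ (rather than $n^{1/2}$) exponential growth of $\pl(n)$. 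The numerical constant $22.2$ in the statement should enter precisely here, as an explicit bound on the sizes of the coefficients of these Taylor expansions.

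Next I would establish a quantitative stability criterion for hyperbolicity, phrased through Hankel determinants. By a classical theorem of Hermite, a degree-$d$ real polynomial has $d$ distinct real roots if and only if the Hankel matrix formed from the Newton power sums of its roots is positive definite; via Newton's identities these determinants are explicit polynomials in the coefficients. For $H_d$ all of the leading principal Hankel minors are strictly positive, and I would furnish explicit lower bounds for them. A coefficientwise perturbation that moves each minor by less than its lower bound preserves positive definiteness, hence hyperbolicity; consolidating this into a single coefficientwise tolerance $\tau(d)$ gives the threshold a perturbed polynomial must beat. Because the Hankel minors of $H_d$ and the sensitivities involved carry factorial-type growth in $d$, the reciprocal tolerance $\tau(d)^{-1}$ grows super-exponentially---this is the origin of the factor $e^{\Gamma(2d^2)/(2\pi)^{2d+2}}$ in the final bound.

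Finally I would merge the two halves. Bounding the coefficientwise error of the renormalized Jensen polynomial by $\varepsilon(n)$ times explicit combinatorial factors (the binomials $\binom{d}{j}$ and powers of $\delta(n)$), I would require this error to be at most $\tau(d)$, solve the resulting inequality for $n$, and assemble the constants into the closed form asserted in the theorem. The principal obstacle is uniform bookkeeping: one must make the remainder $\varepsilon(n)$ effective for all shifts $0\le j\le d$ simultaneously, propagate every constant honestly through the rescaling, the Taylor expansions, and the Hankel-determinant estimates, and then balance the super-exponentially small tolerance $\tau(d)$ against the decaying error so that the exponent $\Gamma(2d^2)/(2\pi)^{2d+2}$ and the base $6d^3(22.2)^{3(d-1)/2}$ come out in exactly the stated shape.
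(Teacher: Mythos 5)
Your plan is, in outline, the same as the paper's: effective Wright-type asymptotics for $\pl(n)$ from \cite{ono2022turan}, a change of variables isolating a small parameter ($w(n)\asymp n^{-1/3}$, $\delta(n)\asymp n^{-2/3}$), Hermite's Hankel-determinant criterion applied to the normalized $J_{\pl}^{d,n}(x)/\pl(n)$, and a perturbation argument off the Hermite-polynomial limit, all in the style of Larson--Wagner.

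There is, however, one substantive point your plan glosses over, and it is exactly the point that produces the dominant factor $e^{\Gamma(2d^2)/(2\pi)^{2d+2}}$. You propose a \emph{fixed} expansion $\log\pl(n)=\kappa n^{2/3}+(\text{a polynomial in }n^{1/3})+\varepsilon(n)$ and then a tolerance $\tau(d)$ coming from lower bounds on the Hankel minors of $H_d$. But the quantity you must keep positive, $\D_{d,\pl,m}(n)$, itself tends to $0$ like $w^{2m(m-1)}$ (equation \eqref{limiting-behaviour-of-D_{d,pl,m}}), so for $m=d$ the ``signal'' is of size $w^{2d(d-1)}$ and any expansion of fixed order has an error that eventually dwarfs it as $d$ grows. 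The paper resolves this by letting the order of the asymptotic expansion grow with $d$, taking $r=d(d-1)$ so that the error $O(w^{2r+3})$ beats $w^{2d(d-1)}$; the price is that the constants in the $r$-term expansion (through $C_r$ and $\alpha_{r+1}\sim 2\Gamma(2r+4)/(r(2\pi)^{4r+6})$) grow super-exponentially in $r\sim d^2$, and \emph{that} is the true origin of the $e^{\Gamma(2d^2)/(2\pi)^{\cdot}}$ factor. Your attribution of this factor to factorial growth of the Hankel minors of $H_d$ is off: the paper needs only the lower bound $\Delta_m(H_d(x))\ge 1$, and the combinatorial sensitivities (Lemmas \ref{lem:bound-on-binomial} and \ref{lem:bound-on-A-i}) contribute only the factors $d^{2d}2^{d(d-1)}(4e/\sqrt{3A})^{d(d-1)}$, i.e.\ the $(6d^3(22.2)^{3(d-1)/2})^{2d}$ part of the bound. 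Without making the expansion order $d$-dependent, the argument as you describe it fails for all sufficiently large $d$.
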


Moreover, by working explicitly with the expression that arises in the proof of Theorem \ref{thm:main_theorem_general_case}, we are able to compute $N_{\pl}(d)$ for the cases $d=3,4,5,6$ and $7$.

\begin{theorem}\label{thm:main_theorem_special case}
	We have that $N_{\pl}(3)=26, N_{\pl}(4)=46, N_{\pl}(5)=73, N_{\pl}(6)=102$ and $N_{\pl}(7)=136$.
\end{theorem}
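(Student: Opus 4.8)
The plan is to reuse the effective framework underlying Theorem~\ref{thm:main_theorem_general_case}, but to sharpen every constant for each individual degree $d\in\{3,4,5,6,7\}$ so that the hyperbolicity threshold it produces is small enough to be completed by an exact finite computation. For each such $d$ the argument has three parts: (i) an effective estimate furnishing an explicit integer $n_0(d)$ such that $J_{\pl}^{d,n}(x)$ is hyperbolic for every $n\ge n_0(d)$; (ii) a direct verification that $J_{\pl}^{d,n}(x)$ is hyperbolic for every $n$ in the intermediate window $N_{\pl}(d)\le n<n_0(d)$; and (iii) a single check that $J_{\pl}^{d,\,N_{\pl}(d)-1}(x)$ is \emph{not} hyperbolic, which pins down $N_{\pl}(d)$ as the claimed minimal value.

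For part (i) I would begin from the strong effective asymptotic expansion of $\pl(n)$ obtained via Wright's circle method in \cite{ono2022turan}, and follow the normalization of \cite{griffin2019}: after an explicit affine rescaling of the variable and a scalar renormalization, the coefficients of $J_{\pl}^{d,n}(x)$ converge, as $n\to\infty$, to those of the degree-$d$ Hermite polynomial $H_d(x)$. The essential input is an explicit bound, valid for the fixed small $d$, on the discrepancy between each coefficient of the rescaled Jensen polynomial and the corresponding coefficient of $H_d$, expressed as a decreasing function of $n$. Because $H_d$ has $d$ simple real roots $x_1<\cdots<x_d$, evaluating $H_d$ at the $d-1$ midpoints of consecutive roots and at two further finite points flanking the extreme roots yields $d+1$ values of strictly alternating sign, each bounded away from zero by an explicit constant; once the coefficientwise discrepancy is smaller than these constants, the perturbed monic polynomial retains the same pattern of $d+1$ sign changes and hence has $d$ real roots. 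Solving the resulting inequality for $n$ produces $n_0(d)$, and because all constants are tuned to the specific $d$ rather than taken uniformly, $n_0(d)$ is far smaller than the bound in Theorem~\ref{thm:main_theorem_general_case}.

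For part (ii) I would compute $\pl(n)$ exactly from MacMahon's product \cite{macmahon2004} for all $n<n_0(d)$, form each $J_{\pl}^{d,n}(x)$, and certify its hyperbolicity in exact integer arithmetic. Real-rootedness can be decided rigorously by an exact Sturm-sequence computation---counting the distinct real roots and confirming that the count equals $d$---or, equivalently, via the positivity of the Hankel determinants built from the Newton power sums of the roots (the Hermite criterion), which are computable directly from the coefficients without locating any root. Part (iii) is then a single application of the same test at $n=N_{\pl}(d)-1$, exhibiting a nonreal root; combined with parts (i) and (ii), this shows that $N_{\pl}(d)$ is exactly $26,46,73,102,136$ for $d=3,4,5,6,7$ respectively.

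The main obstacle is making $n_0(d)$ small enough that part (ii) is computationally feasible: the astronomically large bound of Theorem~\ref{thm:main_theorem_general_case} would render a direct finite check hopeless. The crux is therefore the error analysis in part (i)---both in the asymptotic expansion of $\pl(n)$ and in the closeness-to-Hermite estimate. Trading the uniform-in-$d$ generality of Theorem~\ref{thm:main_theorem_general_case} for degree-specific constants, I expect to bring $n_0(d)$ down to a range of at most a few hundred, where the exact Sturm or Hankel verification in parts (ii) and (iii) terminates quickly.
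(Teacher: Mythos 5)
Your overall architecture --- an effective threshold $n_0(d)$ tuned to each fixed $d$, an exact finite verification up to $n_0(d)$, and a final check that $J_{\pl}^{d,\,N_{\pl}(d)-1}(x)$ fails to be hyperbolic --- matches the paper's strategy, and your parts (ii) and (iii) are exactly what the paper does (it obtains thresholds $2647,\,10714,\,1626,\,13003,\,43883$ for $d=3,\dots,7$ and closes the gap by machine computation). Where you genuinely diverge is the mechanism for part (i). The paper does not argue via coefficientwise closeness to $H_d(x)$ and sign alternation at midpoints of the Hermite roots; instead it uses Hermite's criterion that hyperbolicity follows from positivity of the Hankel determinants $D_{d,m}$, $2\le m\le d$. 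It expands each ratio $\pl(n+j)/\pl(n)$ as a Taylor polynomial $A_{r,s}(j,w)$ in the variable $w=w(n)$ plus an explicitly bounded error $E_j w^s$ (Lemma \ref{lem:key-lemma} together with the derivative bounds of Lemma \ref{lem:nth-derivative-of-R_r}), substitutes $\binom{d}{j}(A_{r,s}(j,w)+E_jw^s)$ into the symbolic determinant, divides by $w^{2m(m-1)}$, and then rigorously minimizes the resulting polynomial in $w$ and the $E_j$ over the box determined by the error bounds, exploiting that the constant term is a positive multiple of $\Delta_m(H_d(x))\ge 1$. Your sign-change argument is a legitimate alternative for fixed small $d$ (an affine rescaling preserves hyperbolicity, and $d+1$ alternations force $d$ real roots), and it is arguably more elementary since it avoids the Newton--Girard/Hankel machinery; but it buys you nothing on the hard part, because the explicit coefficientwise discrepancy you need is precisely the same effective control of $\pl(n+j)/\pl(n)$ via Wright's circle method that the paper must establish, and the Hankel route has the practical advantage of a clean uniform lower bound $\Delta_m(H_d)\ge 1$ in place of the (possibly small) midpoint values of $H_d$. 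Be aware also that your hope of bringing $n_0(d)$ down to ``a few hundred'' is optimistic --- the paper's tuned constants still leave windows of order $10^4$ for $d=6,7$ --- though the finite check remains feasible either way.
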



\begin{center}
\begin{tabular}{ |c|c|c|c|c|c|c|c|c|c|c|c|c|c|c| } 
\hline
$d$ & 8 & 9 & 10 & 11 & 12 & 13 & 14 & 15 & 16 & 17 & 18 & 19 & 20 \\
\hline 
$N_{\pl}(d)$ & 173 & 215 & 260 & 307  & 359 & 414 & 472 & 533 & 596 & 662 & 731 & 803 & 873\\  
\hline
\end{tabular}\\ \vspace{0.2cm}
Table:Conjectural value for $N_{\pl}(d)$ for small $d$.
\end{center}

\begin{rmk}
	It is quite surprising that $N_{\pl}(d)$ for smaller values of $d$ is smaller than corresponding values of $N_p(d)$ since plane partition function has much more complex asymptotic formula than partition function.
\end{rmk}

The overall strategy of the proofs is analogous to that employed by Larson and Wagner \cite{larson2019} in their work on the partition function. However, we emphasize that the calculations here are significantly more complicated because we do not enjoy the modularity of the generating function. As a consequence, obtaining asymptotics for plane partition function is well-known to be far more difficult than for $p(n)$.

Following Hermite, we have a sufficient condition for a polynomial to be hyperbolic in terms of positivity of certain naturally arising determinants. We will consider normalized Jensen polynomials $J_{\pl}^{d,n}(x)/\pl(n)$, in which case, these determinants turn out to be polynomials in $\pl(n+j)/\pl(n)$, where $0\leq j\leq d$. Thanks to Ono, Pujahari and Rolen \cite{ono2022turan}, we have an infinite family of strong asymptotic formulas for $\pl(n)$ with explicit bounds on the error terms. We make judicious choices of the parameters which help us obtain functions that closely approximate these ratios. Analyzing the bounds on the error terms effectively leads to Theorem \ref{thm:main_theorem_general_case}. For $d=3,4,5,6$, and $7$, direct computations give rise to good bounds, allowing us to reduce Theorem \ref{thm:main_theorem_special case} to a reasonably small finite number of cases.

This paper is organized in the following way. In Section \ref{sec:plane-partition}, we study strong asymptotic formulas for $\pl(n)$. In Section \ref{sec:hermite-condition}, we consider certain Hankel determinants and determine their implications for the limiting behavior of $J_{\pl}^{d,n}(x)$, as $n\to\infty$. We study asymptotics of the ratios of plane partition, and also obtain precise bounds on the errors. Finally, we prove Theorem \ref{thm:main_theorem_general_case} and \ref{thm:main_theorem_special case} using the approximation function for the ratios of plane partitions and error estimates accumulated.

\section*{acknowledgements}
I would like to thank Ken Ono for suggesting and guiding this project, and also  for providing research support with his NSF Grant DMS-2055118. I would like to thank Alejandro De Las Penas Castano, Hasan Saad and Wei-Lun Tsai for helpful discussions. I would also like to thank Alejandro De Las Penas Castano for helping me with Mathematica.

\section{Asymptotic formula of $\pl(n)$}\label{sec:plane-partition}
In \cite{ono2022turan}, Ono, Pujahari and Rolen obtained very strong asymptotic formulas for $\pl(n)$. In fact, there are infinitely many formulas, one for each positive integer $r$, where for large n, the implied error terms are smaller with larger choice of $r$. To make this precise, we need two constants
\begin{equation}
	A:=\zeta(3)\approx 1.20206...,\quad \text{and} \quad c:=2\int_0^{\infty}\dfrac{y\log{y}}{e^{2\pi y}-1}dy=\zeta'(-1)\approx -0.16542....
\end{equation} 
Furthermore, for non-negative integers $s$ and $m$, define coefficients $c_{s,m}(n)$ by 
\begin{equation}
	\sum_{n=0}^{\infty}c_{s,m}(n)y^n:=\dfrac{(1+y)^{2s+2m+\frac{13}{12}}}{(3+2y)^{m+\frac{1}{2}}}.
\end{equation}
We define an important parameter using these coefficients,
\begin{equation}
	b_{s,m}:=c_{s,m}(2m).
\end{equation}
The asymptotic formulas are given in terms of special numbers $\beta_0,\beta_1,...$ defined by 
\begin{equation}
	\sum_{n=0}^{\infty}\beta_s y^s := \exp\left(-\sum_{i=1}^{\infty} \alpha_i y^i\right),
\end{equation} 
where
\begin{equation}
	\alpha_s:=\dfrac{2\Gamma(2s+2)\zeta(2s)\zeta(2s+2)}{s(2\pi)^{4s+2}}.
\end{equation}
Also, to reduce the complexity of error terms, for non-negative $r$, Ono et. al. defined 
\begin{equation*}
	n_r:=\min\left\{n\geq 1\,:\, 0.056\cdot \sum_{s=1}^{r+1}\left(\dfrac{s\cdot A^{\frac{1}{3}}}{2^{\frac{7}{6}}n^{\frac{1}{3}}}\right)^{2s}\left(\dfrac{\pi^2n^{\frac{1}{3}}}{(2A)^{\frac{1}{3}}s}+2\right)<1\right\},
\end{equation*}
and
\begin{equation*}
	l_r:=\min\left\{n\geq 1\,:\,2^{r+4}\pi^3\alpha_{r+2}\left(\dfrac{2A}{n}\right)^{\frac{2r+4}{3}}+5e^{-4.7(\frac{n}{2A})^{1/3}}<\dfrac{1}{2}\right\}.
\end{equation*}
The explicit bounds on the error terms are given in terms of $\XX_r(n),\YY_r(n),\ZZ_r(n)$. To define $\XX_r(n)$, and $\YY_r(n)$, we let
\begin{equation}\label{eq:C_r}
	C_r:=2\max_{|z|=1}\left\{\left|e^{-\sum_{s=1}^{r+1} \alpha_s z^s}\right|\right\}.
\end{equation}
We require one additional parameter to define $\ZZ_r$. First, we define
\begin{equation}
	\chi_s(t):=\dfrac{v^{2s+\frac{25}{12}}\sqrt{2v+1}}{2\pi (v^2+v+1)},
\end{equation}
where $t^2=3-2v-v^{-2}$. Using this we define the parameter
\begin{equation}
	D_r:= \dfrac{1}{(2r+4)!}\max\left\{\max\left\{\left|\chi_s^{(2r+4)}(t)\right|\right\}_{t\in\R}\right\}_{s=0}^{r+1}.
\end{equation}
Now we define $\XX_r(n),\YY_r(n)$, and $\ZZ_r(n)$ by
\begin{equation}
	\XX_r(n):=e^{c+AN_n^2}2^{r+\frac{49}{24}}C_r N_n^{-2r-\frac{49}{12}},
\end{equation}
\begin{equation}
	\YY_r(n):=\left|e^{c+AN_n^2}\left(2^{r+5}\pi^3\alpha_{r+2}N_n^{-2r-4}+10e^{-4.7N_n}\right)\left(2^{r+\frac{49}{24}}C_rN_n^{-2r-\frac{49}{12}}+\sum_{s=0}^{r+1}2^{s+\frac{1}{24}}\beta_sN_n^{-2s-\frac{1}{12}}\right)\right|,
\end{equation}
and
\begin{equation}
	\ZZ_r(n):=e^c\left(D_r\cdot\Gamma\left(r+\dfrac{5}{2}\right)(AN_n^2)^{-r-\frac{5}{2}}e^{3AN_n^2}+0.64\cdot 2^{r+1}e^{2AN_n^2}\right)\sum_{s=0}^{r+1}\beta_sN_n^{-2s-\frac{13}{12}}.
\end{equation}
where $N_n:=\left(\frac{n}{2A}\right)^{1/3}$. With the notation above, Ono et. al. proved the following theorem.
\begin{theorem}[\cite{ono2022turan}, Theorem 1.3]
	If $r\in\Z^+$, then for every integer $n\geq \max(n_r,l_r,87)$, then we have that 
	\begin{equation}\label{formula:plane-partition}
	\pl(n)=\dfrac{e^{c+3AN_n^2}}{2\pi}\sum_{s=0}^{r+1}\sum_{m=0}^{r+1}\dfrac{(-1)^m 		\beta_s b_{s,m}\Gamma(m+\frac{1}{2})}{A^{m+\frac{1}{2}}N_n^{2s+2m+\frac{25}{12}}}+E_r^{{\rm{maj}}}(n)+E^{{\rm{min}}}(n),
	\end{equation}
	where
	\begin{equation}\label{eq:minor_error}
		|E^{\min}(n)|\leq \exp\left(\left(3A-\frac{2}{5}\right)N_n^2\right),
	\end{equation}
and 
	\begin{equation}\label{eq:major_error}
	|E_r^{\rm{maj}}(n)|\leq \dfrac{(\XX_{r}(n)+\YY_r(n))e^{2AN_n^2}}{N_n\pi}+|\ZZ_r(n)|.
	\end{equation}
\end{theorem}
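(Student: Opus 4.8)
The plan is to establish the asymptotic formula \eqref{formula:plane-partition} by combining a precise singular expansion of the generating function $f(x)$ at its dominant singularity $x=1$ with a quantitative form of Wright's circle method. Throughout I write $x=e^{-t}$ and study $f(e^{-t})$ as $t\to 0^+$.

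First I would obtain the singular expansion via the Mellin transform. Writing $\log f(e^{-t})=\sum_{n,k\geq 1}\frac{n}{k}e^{-nkt}$, its Mellin transform in $t$ is $\Gamma(s)\zeta(s-1)\zeta(s+1)$, valid for $\mathrm{Re}(s)>2$. Mellin inversion followed by shifting the contour to the left collects residues at $s=2$ (from $\zeta(s-1)$, giving the leading term $A t^{-2}$ with $A=\zeta(3)$), at the double pole $s=0$ (from $\Gamma(s)\zeta(s+1)$, giving $\zeta'(-1)+\tfrac{1}{12}\log t=c+\tfrac{1}{12}\log t$), and at $s=-2k$ for $k\geq 1$ (the poles at $s=-(2k-1)$ being cancelled by the trivial zeros of $\zeta(s-1)$). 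The functional equation of $\zeta$ turns the residue at $s=-2k$ into exactly $-\alpha_k t^{2k}$, so that
\[
f(e^{-t})=e^{c+At^{-2}}\,t^{1/12}\exp\!\Big(-\sum_{k\geq 1}\alpha_k t^{2k}\Big)=e^{c+At^{-2}}\sum_{s\geq 0}\beta_s\,t^{2s+1/12},
\]
which is where the constants $A,c$, the coefficients $\alpha_s$, and the numbers $\beta_s$ enter. Truncating the exponential series at level $r+1$ and bounding the omitted factor by $C_r$ from \eqref{eq:C_r} produces the first piece $\XX_r(n)$ of the major-arc error, while the error incurred by replacing $f$ by this finite singular expansion (governed by the first omitted coefficient $\alpha_{r+2}$ together with the exponentially small contribution $e^{-4.7N_n}$ from the subdominant singularities) produces $\YY_r(n)$.

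Next I would feed this into the circle method. Starting from $\pl(n)=\frac{1}{2\pi i}\int_{\rho-i\pi}^{\rho+i\pi}f(e^{-t})e^{nt}\,dt$ with the radius chosen at the saddle point $\rho=N_n^{-1}$ of the phase $At^{-2}+nt$ (at which the phase equals $3AN_n^2$), I split the contour into a major arc near $t=\rho$ and the complementary minor arc. On the major arc I substitute $t=v/N_n$, so the phase becomes $AN_n^2(v^{-2}+2v)$ with saddle at $v=1$, and then pass to the steepest-descent coordinate defined by $t^2=3-2v-v^{-2}$. A direct computation shows that $v^{2s+1/12}\,\frac{dv}{dt}$ equals $2\pi i\,\chi_s(t)$ up to sign, so the $s$-th term of the major-arc integral becomes $e^{c+3AN_n^2}\beta_s N_n^{-2s-13/12}\int\chi_s(t)\,e^{-AN_n^2 t^2}\,dt$. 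Taylor expanding $\chi_s(t)$ in even powers of $t$ up to order $2r+2$ and evaluating the Gaussian moments $\int t^{2m}e^{-AN_n^2 t^2}\,dt=\Gamma(m+\tfrac12)(AN_n^2)^{-m-1/2}$ reproduces exactly the double sum in \eqref{formula:plane-partition}, where the identity $b_{s,m}=c_{s,m}(2m)$ records the relevant Taylor coefficient of $\chi_s$ through the substitution $v=1+y$ (which sends $2v+1\mapsto 3+2y$). The remainder of this Taylor expansion, controlled by the bound $D_r$ on the $(2r+4)$-th derivative of $\chi_s$ together with the tails of the Gaussian integrals, is precisely $\ZZ_r(n)$.

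Finally, the minor-arc contribution must be absorbed into $E^{\min}(n)$. Here one bounds $|f(e^{-\rho-i\theta})|$ for $\theta$ bounded away from $0$: away from the singularity the logarithm of $|f|$ loses a definite fraction of its peak value, so that $|f(e^{-\rho-i\theta})|\,e^{n\rho}$ is smaller than the peak by a factor guaranteeing $|E^{\min}(n)|\leq\exp((3A-\tfrac25)N_n^2)$. I expect the main obstacle to be neither the residue bookkeeping nor the Gaussian evaluation, which are essentially mechanical, but rather making every one of these error terms simultaneously \emph{explicit and uniform in both $r$ and $n$}: one must track the precise constants in the tail of the singular expansion (via $C_r$), in the Taylor remainder of $\chi_s$ (via $D_r$ and $\Gamma(r+\tfrac52)$), and in the minor-arc estimate, and verify that the stated thresholds $n\geq\max(n_r,l_r,87)$ indeed force the geometric-type remainders to converge. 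It is this uniform, constant-chasing control—rather than the shape of the main term—that is the delicate part of the argument.
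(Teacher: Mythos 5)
This statement is not proved in the paper at all: it is quoted verbatim as Theorem 1.3 of Ono--Pujahari--Rolen \cite{ono2022turan}, the only in-text justification being the remark that it follows from Wright's circle method, with $E^{\min}(n)$ coming from the minor arcs and $E_r^{\rm{maj}}(n)$ from the major arcs. Your outline --- the Mellin-transform singular expansion of $\log f(e^{-t})$ producing $A$, $c$, the $\alpha_k$ and $\beta_s$ (including the correct cancellation at $s=-(2k-1)$ and the double pole at $s=0$ giving $c+\tfrac{1}{12}\log t$), followed by a saddle-point evaluation at $t=N_n^{-1}$ in which $\chi_s$, $b_{s,m}$, $C_r$ and $D_r$ play exactly the roles you assign them --- is precisely the route of the cited proof, so it matches the source's approach; just note that what you have written is a correct roadmap rather than a complete proof, since the explicit, $r$- and $n$-uniform constants (the genuinely delicate part, as you yourself observe) are asserted rather than derived.
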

This formula is proved using Wright's circle method. The quantity $E^{\rm{min}}(n)$ arises from minor arc integrals, and $E_r^{\rm{maj}}(n)$ arises from major arc integrals.

To obtain our results, we must make this result effective, and then make good choices of the parameters for our application. To this end, we make the following change of variables:
\begin{equation}\label{eq:formula-for-w-and-delta}
	w(n):=\dfrac{2^{1/3}}{\sqrt{3}A^{1/6}n^{1/3}}\quad \text{ and } \quad \delta(n):=\dfrac{\sqrt{3A}}{2}w(n)^{2}.
\end{equation}
For our purpose, we restrict to the case when $w\in [0,\varepsilon_{r,d}]$, where 
\begin{align}
	\varepsilon_{r,d} := & \, d^{-2d} 2^{-d(d-1)}\cdot \left(\frac{4e}{\sqrt{3A}}\right)^{-d(d-1)} \left( e^{\frac{\Gamma(2d^2)}{(2\pi)^{2d^2+2}}}6^{2d-2} (6A)^r (r+1) \right)^{-\frac{1}{3}} \left(0.1485\cdot 2^{14} (3A)^{3}\pi^3 \right)^{-\frac{1}{3}},
\end{align}
corresponding to our eventual bound on $N_{\pl}(d)$ for right choice of $r$ (depending on $d$), since we also want to give an upper bound on errors for $w\in[0,\varepsilon_{r,d}]$. 

\begin{theorem}\label{thm:formula-for-pl-in-w}
	If $r\in\Z^+$ and $w:=w(n)$, then for every $w\in [0,\varepsilon_{r,d}]$ we have 
	\begin{equation*}
		\pl(n)= \widehat{\pl}_r(w) + E_r(w),
	\end{equation*}
	where 
	\begin{align}
	\widehat{\pl}_r(w):=&\dfrac{e^{c+\frac{1}{w^2}}w^{\frac{25}{12}}}{2\pi}\sum_{s=0}^{r+1}\sum_{m=0}^{r+1}f_{s,m} w^{2s+2m}\\ \nonumber
	:=&\dfrac{e^{c+\frac{1}{w^2}}w^{\frac{25}{12}}}{2\pi}\sum_{s=0}^{r+1}\sum_{m=0}^{r+1}(-1)^m \beta_s b_{s,m}\Gamma\left(m+\frac{1}{2}\right)3^{s+m+\frac{25}{24}}A^{s+\frac{13}{24}} w^{2s+2m},
	\end{align}
	and 
	\begin{align}
		\left|E_r(w)\right| & \leq e^{c+\frac{1}{w^2}}\cdot C_r \cdot 2^{r+8+\frac{1}{24}} \pi^2  (3A)^{r+\frac{61}{24}} (r+2) \cdot w^{2r+5+\frac{1}{12}}.
	\end{align}
\end{theorem}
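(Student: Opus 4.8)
The plan is to derive the claimed identity by substituting the change of variables \eqref{eq:formula-for-w-and-delta} into the Ono--Pujahari--Rolen expansion \eqref{formula:plane-partition}. First I would record the elementary relations produced by this substitution. Writing $N_n=(n/2A)^{1/3}$, a direct computation from \eqref{eq:formula-for-w-and-delta} gives $w^2N_n^2=1/(3A)$, and hence
\[
N_n^2=\frac{1}{3Aw^2},\qquad 3AN_n^2=\frac{1}{w^2},\qquad AN_n^2=\frac{1}{3w^2},\qquad N_n^{-1}=\sqrt{3A}\,w .
\]
In particular $e^{c+3AN_n^2}=e^{c+1/w^2}$ and $N_n^{-2k}=(3A)^kw^{2k}$ for all $k$. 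Substituting $N_n^{-(2s+2m+25/12)}=(3A)^{s+m+25/24}w^{2s+2m+25/12}$ together with the factor $A^{-(m+1/2)}$ into the double sum of \eqref{formula:plane-partition}, collecting the resulting powers $A^{s+13/24}$ and $3^{s+m+25/24}$ and factoring out $w^{25/12}$, turns the main term into exactly $\widehat{\pl}_r(w)$ with $f_{s,m}=(-1)^m\beta_sb_{s,m}\Gamma(m+\tfrac12)3^{s+m+25/24}A^{s+13/24}$. Thus $E_r(w)=\pl(n)-\widehat{\pl}_r(w)=E_r^{\mathrm{maj}}(n)+E^{\mathrm{min}}(n)$, and the whole problem reduces to re-expressing the bounds \eqref{eq:minor_error}--\eqref{eq:major_error} in the variable $w$.

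Next I would transform the four error contributions separately and identify $\XX_r$ as the dominant one. Combining the exponentials in $\XX_r(n)e^{2AN_n^2}/(N_n\pi)$ gives $e^{c+1/w^2}$, while $N_n^{-2r-49/12-1}=N_n^{-2r-61/12}=(3A)^{r+61/24}w^{2r+61/12}$, so this piece equals $\tfrac1\pi 2^{r+49/24}C_r(3A)^{r+61/24}w^{2r+61/12}e^{c+1/w^2}$. Since $5+\tfrac1{12}=\tfrac{61}{12}$, this already carries the exact power $w^{2r+5+1/12}$ of the target, and its constant is smaller than the target constant by the factor $64\pi^3(r+2)$. The remaining pieces I expect to be genuinely subdominant on $[0,\varepsilon_{r,d}]$: in $\YY_r e^{2AN_n^2}/(N_n\pi)$ the largest surviving term comes from pairing $N_n^{-2r-4}$ in the first inner factor with the $s=0$ term $2^{1/24}\beta_0N_n^{-1/12}$ of the second and the extra $N_n^{-1}$, producing $\pi^22^{r+5+1/24}\alpha_{r+2}\beta_0(3A)^{r+61/24}w^{2r+61/12}e^{c+1/w^2}$ (recall $\beta_0=1$), with all other products carrying strictly higher powers of $w$ or the exponentially small $e^{-4.7N_n}=e^{-4.7/(\sqrt{3A}w)}$. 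In $\ZZ_r$ the first summand has $e^{3AN_n^2}=e^{1/w^2}$ and $(AN_n^2)^{-r-5/2}=3^{r+5/2}w^{2r+5}$, hence order $w^{2r+73/12}$, one full power of $w$ above the target, while its second summand and $E^{\mathrm{min}}$ both carry exponentials, $e^{2AN_n^2}=e^{2/(3w^2)}$ and $e^{(3A-2/5)N_n^2}=e^{(1-2/(15A))/w^2}$ respectively, that are exponentially smaller than $e^{1/w^2}$.

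The remaining work is the bookkeeping: restricting to $w\le\varepsilon_{r,d}$, I would bound each higher-order-in-$w$ contribution of order $w^{2r+61/12+j}$ ($j>0$) by $\varepsilon_{r,d}^{\,j}$ times a target-order term, and handle the exponentially small contributions by factoring out the common $e^{c+1/w^2}$ and noting that the leftover factors $e^{-\kappa/w^2}w^{-\mathrm{power}}$ are small for $w$ below an explicit threshold, so that all of $\YY_r,\ZZ_r,E^{\mathrm{min}}$ collapse into multiples of $e^{c+1/w^2}w^{2r+61/12}$. It then suffices to check that the sum of the $\XX_r$-constant, the leading $\YY_r$-constant, and these collapsed remainders stays below the target constant $C_r2^{r+8+1/24}\pi^2(3A)^{r+61/24}(r+2)$. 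Since the $\XX_r$-constant is only $\tfrac{1}{64\pi^3(r+2)}$ of the target, the binding requirement is that the leading $\YY_r$-constant, which replaces the target's $C_r$ by $\alpha_{r+2}$, be dominated; this follows from the comparison $\alpha_{r+2}\le 8(r+2)C_r$, which I would obtain from the lower bound $C_r\gtrsim e^{\alpha_{r+1}}$ (take $z=e^{i\pi/(r+1)}$ on $|z|=1$ in \eqref{eq:C_r}) together with the super-geometric growth $\alpha_{s+1}/\alpha_s\to\infty$ forced by the factor $\Gamma(2s+2)/(2\pi)^{4s+2}$.

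The main obstacle is the effective, uniform control of the auxiliary constants $C_r$, $D_r$, $\alpha_s$, $\beta_s$, and $b_{s,m}$, all of which must be estimated explicitly to make the threshold $\varepsilon_{r,d}$ rigorous. I expect $D_r$ to be the hardest: because $\chi_s$ is defined implicitly through $t^2=3-2v-v^{-2}$, bounding its $(2r+4)$-th derivative seems to require passing to the $v$-variable and applying a Cauchy estimate on a suitably chosen contour, and it is precisely here that the factor $e^{\Gamma(2d^2)/(2\pi)^{2d^2+2}}$ and the powers $(6A)^r$ and $6^{2d-2}$ appearing in $\varepsilon_{r,d}$ should emerge. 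One must also confirm that the interval $w\in[0,\varepsilon_{r,d}]$ corresponds to a range of $n$ lying above $\max(n_r,l_r,87)$, so that \eqref{formula:plane-partition} is valid throughout.
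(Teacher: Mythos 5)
Your proposal follows essentially the same route as the paper: substitute the change of variables into the Ono--Pujahari--Rolen expansion to identify $\widehat{\pl}_r(w)$ and the coefficients $f_{s,m}$, convert $\XX_r,\YY_r,\ZZ_r$ and $E^{\min}$ to the $w$-variable, observe that the $\XX_r$- and leading $\YY_r$-contributions carry exactly the power $w^{2r+5+\frac{1}{12}}$ while $\ZZ_r$ (one power of $w$ higher) and the exponentially small terms are absorbed using $w\le\varepsilon_{r,d}$, and then sum the four pieces into the stated constant. The one component you defer --- the explicit bound on $D_r$ --- is obtained in the paper by a direct induction on the derivative formulas for $v$, $2v+1$ and $v^2+v+1$ along the explicit curve rather than by a Cauchy estimate, but your identification of the dominant terms and of the needed comparisons (the factor $64\pi^3(r+2)$ of slack for $\XX_r$, and $\alpha_{r+2}$ versus $C_r$ for $\YY_r$) matches the paper's bookkeeping.
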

\begin{rmk}
	We stress that $\widehat{\pl}_r(w(n))\sim\pl(n)$ as $n\to \infty$.
\end{rmk}
\begin{proof}
First let's convert $\XX_r,\YY_r$ and $\ZZ_r$ from $n$ to $w$ using \eqref{eq:formula-for-w-and-delta}. 
\begin{align}
	\widehat{\XX}_{r}(w):=\XX_r\left(\dfrac{2}{3^{3/2}\sqrt{A}w^3}\right)& =e^{c+\frac{1}{3w^2}}2^{r+\frac{49}{24}}C_r (3A)^{r+\frac{49}{24}} w^{2r+4+\frac{1}{12}}\\ \nonumber
	& \leq e^{c+\frac{1}{3w^2}}\cdot C_r \cdot 2^{r+6+\frac{1}{24}} \pi^3  (3A)^{r+\frac{49}{24}} (r+2) \cdot w^{2r+4+\frac{1}{12}}.
\end{align}
where by \eqref{eq:C_r}, we have
\begin{align}
	C_r\leq 2\cdot e^{\max\{0.02,\frac{(r+1)}{2}\alpha_{r+1}\}} =	&\begin{cases}
				2\cdot e^{\frac{(r+1)}{2}\alpha_{r+1}} & \text{if } r\geq 22\\
				2 e^{0.02} & \text{otherwise},
			\end{cases}
\end{align}
since $\alpha_{r+1}\geq \alpha_{r}$ for all $r > 21$ and $10^{-3}<\alpha_r<10^{-17}$ for $r \leq 21$. We also have
\begin{align}
	\widehat{\YY}_{r}(w)&:=\YY_r\left(\dfrac{2}{3^{3/2}\sqrt{A}w^3}\right)\\ \nonumber
	&=\Bigg|e^{c+\frac{1}{3w^2}}\left(2^{r+5}\pi^3\alpha_{r+2} (3A)^{r+2} w^{2r+4}+10e^{-4.7 \frac{1}{\sqrt{3A}w}}\right) \\ \nonumber
	&\qquad \times\left(2^{r+\frac{49}{24}}C_r (3A)^{r+\frac{49}{24}}w^{2r+\frac{49}{12}}+\sum_{s=0}^{r+1}2^{s+\frac{1}{24}}\beta_s (3A)^{s+\frac{1}{24}} w^{2s+\frac{1}{12}}\right)\Bigg|\\ \nonumber
	&\leq 2\cdot 2\cdot e^{c+\frac{1}{3w^2}}2^{r+5+\frac{1}{24}} \pi^3  (3A)^{r+\frac{49}{24}} \alpha_{r+2} (r+2) w^{2r+4+\frac{1}{12}} \\ \nonumber
	& \leq e^{c+\frac{1}{3w^2}}\cdot C_r \cdot 2^{r+6+\frac{1}{24}} \pi^3  (3A)^{r+\frac{49}{24}} (r+2) \cdot w^{2r+4+\frac{1}{12}},
\end{align}
and
\begin{align}
	\widehat{\ZZ}_{r}(w)&:=\ZZ_r\left(\dfrac{2}{3^{3/2}\sqrt{A}w^3}\right)\\ \nonumber
	&=e^c\left(D_r\cdot\Gamma\left(r+\dfrac{5}{2}\right)(3w^2)^{r+\frac{5}{2}}e^{\frac{1}{w^2}}+0.64\cdot 2^{r+1}e^{\frac{2}{3w^2}}\right)\sum_{s=0}^{r+1}\beta_s (3A)^{s+\frac{13}{24}}w^{2s+\frac{13}{12}} \\ \nonumber
	& \leq e^{c+\frac{1}{w^2}}\cdot 2\cdot D_r\Gamma\left(r+\frac{5}{2}\right) 3^{r+\frac{5}{2}+\frac{13}{24}}A^{\frac{13}{24}}(r+2)w^{2r+5+\frac{13}{12}}.
\end{align}
We investigate $D_r$. To this end we recall that
\begin{align*}
	\chi_s(t)=\dfrac{v^{2s+\frac{25}{12}}\sqrt{2v+1}}{2\pi (v^2+v+1)},
\end{align*}
where $t^2=3-2v-v^{-2}$. First, since we have that 
\begin{equation}\label{eq:dv/dt}
	\dfrac{dv}{dt} = \dfrac{tv^3}{1-v^3}=i\dfrac{v^2\sqrt{2v+1}}{v^2+v+1},
\end{equation}
we get
\begin{align}
	\dfrac{d}{dt}v^k = & i\dfrac{kv^{k+1} \sqrt{2v+1}}{v^2+v+1}; \label{eq:dv} \\ 
	\dfrac{d}{dt}(2v+1)^{k} = & i\dfrac{(2k)(2v+1)^{k-1/2}v^2}{v^2+v+1}; \label{eq:2v+1} \\ 
	\dfrac{d}{dt}(v^2+v+1)^{-k} = & i\dfrac{(-k)v^2(2v+1)^{3/2}}{(v^2+v+1)^{k+2}}. \label{eq:v^2+v+1}
\end{align}
The parameterization of the curve traced by $v$ is given by $x\pm \sqrt{\sqrt{x}-x^2} , x\in[0,1]$ (see page 21 last paragraph of \cite{ono2022turan}). Using Mathematica, one checks that 
\begin{align*}
	|v|\leq 1;  \hspace{1cm}  1\leq \left| 2v+1 \right| \leq 3; \hspace{1cm} 0.9621 \leq \left| v^2+v+1 \right| \leq 3.
\end{align*}
One can differentiate $\chi_s(t)=fgh$ with respect to $t$ using the product rule (where $f=v^{k_1}, g=(2v+1)^{k_2}$ and $h=(v^2+v+1)^{k_3}$) and the composition rule and using \eqref{eq:dv/dt}-\eqref{eq:v^2+v+1}, a simple induction shows that the $n$-th derivative of $\chi_s(t)$ has $3^n$ terms of the form $c_{k_1,k_2,k_3}\cdot\frac{v^{k_1}(2v+1)^{k_2}}{(v^2+v+1)^{k_3}}$. The values of $c_{k_1,k_2,k_3}$ are bounded above by $\prod_{k=0}^{n}\left(2s+2+\frac{25}{12}+k\right)$, and maximizing the possible powers of each of the $v, 2v+1, v^2+v+1$, one gets that 
\begin{align*}
	\left| \dfrac{d^n}{dt^n} \chi_{s}(t) \right| \leq  \dfrac{3^n 3^{\frac{3n}{2}+\frac{1}{2}}}{2\pi (0.9621)^{2n}}\prod_{k=0}^{n}\left(2s+2+\dfrac{25}{12}+k\right).
\end{align*}
Hence, by the definition of $D_r$, we have that 
\begin{equation}
	D_r \leq   \dfrac{3^{5r+10+\frac{1}{2}}}{2\pi (2r+4)!(0.9621)^{4r+8}}\prod_{k=0}^{2r+4}\left(2r+2+\dfrac{25}{12}+k\right).
\end{equation}
So we get that 
\begin{align}
	\widehat{\ZZ}_{r}(w) & \leq e^{c+\frac{1}{w^2}}\cdot 2\cdot \dfrac{2^{2r+4}3^{5r+10+\frac{1}{2}}}{(0.9621)^{4r+8}}\Gamma\left(r+\frac{5}{2}\right) 3^{r+\frac{5}{2}+\frac{13}{24}}A^{\frac{13}{24}}(r+2)w^{2r+5+\frac{13}{12}} \\ \nonumber
						 & \leq e^{c+\frac{1}{w^2}}\cdot 2\cdot \dfrac{2^{2r+4}3^{5r+10+\frac{1}{2}}}{(0.9621)^{4r+8}}\Gamma\left(r+\frac{5}{2}\right) 3^{r+\frac{5}{2}+\frac{13}{24}}A^{\frac{13}{24}}(r+2)\cdot\varepsilon_{r,d}\cdot w^{2r+5+\frac{1}{12}} \\ \nonumber
	& \leq e^{c+\frac{1}{w^2}}\cdot C_r \cdot 2^{r+6+\frac{1}{24}} \pi^2  (3A)^{r+\frac{61}{24}} (r+2) \cdot w^{2r+5+\frac{1}{12}},
\end{align}
where the last inequality comes after substituting the value $\varepsilon_{r,d}$ and comparing with $\widehat{\XX}_r$. 
We also have
\begin{align*}
	\left| E^{\min}(n)\right| \leq & e^{c+\frac{1}{w^2}}\cdot C_r \cdot 2^{r+6+\frac{1}{24}} \pi^2  (3A)^{r+\frac{61}{24}} (r+2) \cdot w^{2r+5+\frac{1}{12}}.
\end{align*}
Everywhere above we are using the fact that $w$ is small enough so that the dominant term is the term with the smallest power of $w$. This gives us
\begin{align*}
	\left|E_r(w)\right| & \leq \left|E_r^{{\rm{maj}}}(n)+E^{{\rm{min}}}(n)\right|\\ \nonumber
			& \leq 	\left|\dfrac{(\widehat{\XX}_{r}(w)+\widehat{\YY}_r(w))e^{2AN_n^2}}{N_n\pi}\right|+\left|\widehat{\ZZ}_r(w)\right|+ \exp\left(\left(3A-\frac{2}{5}\right)\frac{1}{3Aw^2}\right) \\ \nonumber
			& \leq 4\cdot e^{c+\frac{1}{w^2}}\cdot C_r \cdot 2^{r+6+\frac{1}{24}} \pi^2  (3A)^{r+\frac{61}{24}} (r+2) \cdot w^{2r+5+\frac{1}{12}}.
\end{align*}
\end{proof}

\section{Proofs of Theorem \ref{thm:main_theorem_general_case} and \ref{thm:main_theorem_special case}}\label{sec:hermite-condition}
Here we prove Theorem \ref{thm:main_theorem_general_case} and \ref{thm:main_theorem_special case} using Theorem \ref{thm:formula-for-pl-in-w}. We start with a sufficient condition (due to Hermite) for establishing the hyperbolicity of a polynomial.
\subsection{Hankel determinant}
For a given real polynomial $P(x)=a_d x^d+a_{d-1} x^{d-1}+\cdots+ a_1x+a_0$, let $S_k:=\sum_{i=1}^d \lambda_i^k$ be the $k$th power sum of real roots. Then take
\begin{equation}
\Delta_m(P(x)):= \begin{vmatrix}
				S_0 & S_1 & \cdots & S_{m-1} \\
				S_1 & S_2 & \cdots & S_{m} \\
				S_2 & S_3 & \cdots & S_{m+1} \\
				\vdots & \vdots & \vdots & \vdots \\
				S_{m-1} & S_{m} & \cdots & S_{2m-2} 
			\end{vmatrix}	= \sum_{i_1<i_2<\cdots < i_m} \prod_{a<b} (\lambda_{i_a}-\lambda_{i_b})^2.
\end{equation}
For convenience, we define
\begin{equation}
D_{d,m}(P(x))=D_{d,m}(a_0,a_1,\cdots,a_d):=a_d^{2m-2}\cdot\Delta_m(P(x)),
\end{equation}
so that $D_{d,d}(a_0,a_1,\cdots,a_d)$ is the discriminant of $P(x)$, and $D_{d,m}(a_0,a_1,\cdots,a_d)$ is a homogeneous polynomial of degree $2m-2$ in the coefficients $a_i$. A theorem of Hermite \cite{obrechkoff2003zeros} says the hyperbolicity of $P(x)$ is implied by the positivity of $D_{d,m}(a_0,a_1,\cdots,a_d)$ for all $2\leq m\leq d$.

We will prove Theorems \ref{thm:main_theorem_general_case} and \ref{thm:main_theorem_special case} by showing that 
\begin{align}
	\D_{d,\pl,m}(n) & := D_{d,m}\left(\dfrac{J_{\pl}^{d,n}(x)}{\pl(n)}\right)\\ \nonumber
	 &=D_{d,m}\left(1,\binom{d}{1}\dfrac{\pl(n+1)}{\pl(n)},\binom{d}{2}\dfrac{\pl(n+2)}{\pl(n)},\cdots,\binom{d}{d}\dfrac{\pl(n+d)}{\pl(n)}\right)>0,
\end{align}
for each $m=2,\cdots,d$ and all $n$ greater than $N_{\pl}(d)$ mentioned in the theorem. Note that the limit $\lim_{n\to\infty}\frac{\pl(n+j)}{\pl(n)}=1$ for a fixed $j$, which implies that $\lim_{n\to\infty} J_{\pl}^{d,n}(x)/\pl(n)=(x+1)^d$. This implies that $\D_{d,\pl,m}$ approaches $0$ in the limit as $n\to \infty$, which a priori, makes the sign of $\D_{d,\pl,m}(n)$ difficult to determine.

However, we can determined the rate at which $\D_{d,\pl,m}(n)$ approaches $0$ and the coefficient of the leading term using the results in \cite{ono2022turan} and \cite{griffin2019}. More precisely, we have that 
\begin{equation}
	\lim_{n\to\infty}\dfrac{1}{\delta(n)^{m(m-1)}}\Delta_m\left(\dfrac{J_{\pl}^{d,n}(x)}{\pl(n)} \right)=\lim_{n\to\infty}\Delta_m\left(\dfrac{J_{\pl}^{d,m}\left(\delta(n)x-e^{-\sqrt{3A}w(n)}\right)}{\pl(n)}\right)=\Delta_m(H_d(x)).
\end{equation}
If we do a change of variable using \eqref{eq:formula-for-w-and-delta} and use $\D_{d,\pl,m}$ notation, then this translates to 
\begin{equation}\label{limiting-behaviour-of-D_{d,pl,m}}
	\lim_{w\to 0}\dfrac{1}{w^{2m(m-1)}}\D_{d,\pl,m}(n)=\left(\dfrac{\sqrt{3A}}{2}\right)^{m(m-1)}\Delta_m(H_d(x)).
\end{equation}
Since Hermite polynomials have distinct real roots, the right hand side is a positive constant. We will exploit this fact by using $(2m(m-1)+1)$-Taylor polynomial of $\D_{d,\pl,m}(n)$ around 0. The constant term in the left hand side will be a constant multiple of $\Delta_m(H_d(x))$. We will then find explicit bounds for the remaining terms that are tending to zero.

To do this, we need to study ratios of plane partitions.

\subsection{Approximation of ratios of plane partition}
In this subsection, we give approximation function for the ratios of the plane partitions with the error function. For each non-negative integers $r$ and $j$, we define approximation function for $\frac{\pl(n+j)}{\pl(n)}$ by
\begin{equation}
	R_{r}(j,w):=\dfrac{\widehat{\pl}_r\left(\frac{w}{\left(1+\frac{3^{3/2}\sqrt{A}}{2} jw^3\right)^{\frac{1}{3}}}\right)}{\widehat{\pl}_r(w)}\sim \dfrac{\pl(n+j)}{\pl(n)}.
\end{equation}
In order to state precisely how well $R_{r}(j,w)$ approximates $\pl(n+j)/\pl(n)$, let's define
\begin{align}\label{eq:upper-bound-on-L_r}
	L_r(w) :=\dfrac{E_r(w)}{\widehat{\pl}_r(w)} \leq \dfrac{\sqrt{3A}}{0.1485}\cdot C_r \cdot 2^{r+9+\frac{1}{24}} \pi^3  (3A)^{r+1}(r+2) \cdot w^{2r+3}.
\end{align}
Then we have the following lemma.
\begin{lemma}\label{lem:error-bound}
	For all $n\geq 1$, we have 
	\[
	\left| \dfrac{\pl(n+j)}{\pl(n)}-R_r(j,w) \right|\leq R_r(j,w)\left|\dfrac{2 L_r(w)}{1-L_r(w)}\right|.
	\]
\end{lemma}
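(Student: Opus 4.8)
The plan is to relate the two ratios $\pl(n+j)/\pl(n)$ and $R_r(j,w)$ by exploiting that both the numerator and denominator of $R_r(j,w)$ are the approximations $\widehat{\pl}_r$ to the true values $\pl$, with controlled relative error given by $L_r(w)$. The key observation is the change of variable hidden in the definition of $R_r$: if $w=w(n)$, then the argument $w/(1+\tfrac{3^{3/2}\sqrt{A}}{2}jw^3)^{1/3}$ is exactly $w(n+j)$. This follows from the formula $w(n)=\tfrac{2^{1/3}}{\sqrt{3}A^{1/6}n^{1/3}}$ in \eqref{eq:formula-for-w-and-delta}, since replacing $n$ by $n+j$ multiplies $n^{-1/3}$ by $(1+j/n)^{-1/3}$, and one checks $j/n=\tfrac{3^{3/2}\sqrt{A}}{2}jw(n)^3$. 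Hence $R_r(j,w)=\widehat{\pl}_r(w(n+j))/\widehat{\pl}_r(w(n))$, which makes it the honest ratio of the two main-term approximations.

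With this identification, I would write each exact value as its approximation times a correction. From Theorem \ref{thm:formula-for-pl-in-w} we have $\pl(n)=\widehat{\pl}_r(w)(1+L_r(w))$ where $L_r(w)=E_r(w)/\widehat{\pl}_r(w)$, and similarly $\pl(n+j)=\widehat{\pl}_r(w(n+j))(1+L_r(w(n+j)))$. Therefore
\begin{equation*}
\frac{\pl(n+j)}{\pl(n)}=R_r(j,w)\cdot\frac{1+L_r(w(n+j))}{1+L_r(w)}.
\end{equation*}
The difference from $R_r(j,w)$ is then
\begin{equation*}
\frac{\pl(n+j)}{\pl(n)}-R_r(j,w)=R_r(j,w)\left(\frac{1+L_r(w(n+j))}{1+L_r(w)}-1\right)=R_r(j,w)\cdot\frac{L_r(w(n+j))-L_r(w)}{1+L_r(w)},
\end{equation*}
so it remains to bound the last factor in absolute value by $|2L_r(w)/(1-L_r(w))|$.

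For the bound, note that $w(n+j)\leq w(n)=w$ since $w$ is decreasing in $n$, and $L_r(w)$ in \eqref{eq:upper-bound-on-L_r} is an increasing function of $w$ (a positive constant times $w^{2r+3}$); hence $|L_r(w(n+j))|\leq |L_r(w)|$. Applying the triangle inequality to the numerator gives $|L_r(w(n+j))-L_r(w)|\leq 2|L_r(w)|$, and bounding the denominator via $|1+L_r(w)|\geq 1-|L_r(w)|$ yields the claimed estimate. The main subtlety to get right is the verification that the rescaled argument of $\widehat{\pl}_r$ is precisely $w(n+j)$, and the monotonicity of $L_r$ together with the sign control needed to pass from $1+L_r(w)$ in the denominator to $1-L_r(w)$; once those are in place the inequality is immediate. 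One should also confirm that $w(n+j)$ still lies in the admissible range $[0,\varepsilon_{r,d}]$ so that Theorem \ref{thm:formula-for-pl-in-w} applies to the shifted index, which holds because $w(n+j)\leq w(n)\leq \varepsilon_{r,d}$.
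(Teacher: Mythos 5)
Your proof is correct and follows essentially the same route as the paper's: write each of $\pl(n)$ and $\pl(n+j)$ as its approximation $\widehat{\pl}_r$ times a factor $1+L_r$, factor out $R_r(j,w)$, and bound the resulting quotient of corrections by $2|L_r(w)|/(1-|L_r(w)|)$. You are in fact slightly more explicit than the paper in verifying that the rescaled argument of $\widehat{\pl}_r$ is exactly $w(n+j)$ and in invoking the monotonicity of the upper bound on $L_r$ to control $|L_r(w(n+j))|$ by the bound at $w$.
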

\begin{proof}
	We have that $\widehat{E}_r(w)=\pl(n)-\widehat{\pl}_r(w)=E_r^{{\rm{maj}}}(n)+E^{{\rm{min}}}(n).$ By direct calculations, we have
	\begin{align*}
		\left|\dfrac{\pl(n+j)}{\pl(n)}-R_r(j,w) \right| &= \left|\dfrac{\pl(n+j)}{\pl(n)}-\dfrac{\widehat{\pl}_r\left(w(n+j)\right)}{\widehat{\pl}_r(w(n))}\right|\\
		&=\dfrac{\widehat{\pl}_r\left(w(n+j)\right)}{\widehat{\pl}_r(w(n))}\left|\dfrac{1+\frac{\widehat{E}_r(w(n+j))}{\widehat{\pl}_r(w(n+j))}}{1+\frac{\widehat{E}_r(w(n))}{\widehat{\pl}_r(w(n))}}-1\right|\\
		&=R_r(j,w)\left|\dfrac{\frac{\widehat{E}_r(w+j)}{\widehat{\pl}_r(w(n+j))}-\frac{\widehat{E}_r(w(n))}{\widehat{\pl}_r(w(n))}}{1+\frac{\widehat{E}_r(w(n))}{\widehat{\pl}_r(w(n))}}\right|	\leq R_r(j,w)\left| \dfrac{2 L_r(w)}{1-L_r(w)} \right|.
	\end{align*}
\end{proof}

To study the behavior of $\pl(n+j)/\pl(n)$ for large $n$, we want to study $R_r(j,w)$ near $w=0$. To this end, let $A_{r,s}(j,w)$ be a degree $s-1$ Taylor polynomial of $R_r(j,w)$. Applying Lemma \ref{lem:error-bound} and Taylor's Theorem, we immediately obtain the following.
\begin{lemma}\label{lem:key-lemma}
	Let $n\geq 1$ and $w\in[0,\varepsilon]$ for some $0<\varepsilon\leq \frac{2^{1/3}}{\sqrt{3}A^{1/6}}$. Then we have that
	\[
		\dfrac{\pl(n+j)}{\pl(n)}=A_{r,s}(j,w)+E_{r,s}(j,w)w^s,
	\]
	where
	\begin{equation}\label{eq:tail-of-taylor-series}
		\left|E_{r,s}(j,w)\right|\leq \dfrac{1}{s!}\cdot\sup_{x\in[0,\varepsilon]}\left|R^{(s)}_r(j,x)\right|+\sup_{x\in[0,\varepsilon]}\left| R_r(j,x)\dfrac{2L_r(x)}{x^s(1-L_r(x))} \right|.
	\end{equation}
\end{lemma}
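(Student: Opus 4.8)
The plan is to decompose the total error into an honest approximation error plus a Taylor-truncation error, bound each by a constant multiple of $w^s$, and recombine. First I would write the identity
\[
\frac{\pl(n+j)}{\pl(n)} - A_{r,s}(j,w) = \left(\frac{\pl(n+j)}{\pl(n)} - R_r(j,w)\right) + \left(R_r(j,w) - A_{r,s}(j,w)\right),
\]
which separates the error in replacing $\pl(n+j)/\pl(n)$ by its model $R_r(j,w)$ from the error in truncating $R_r$ to its Taylor polynomial.

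For the truncation term, recall that $A_{r,s}(j,w)$ is by definition the degree $s-1$ Taylor polynomial of $R_r(j,w)$ about $w=0$. Applying Taylor's theorem with the Lagrange form of the remainder gives $R_r(j,w) - A_{r,s}(j,w) = \frac{1}{s!}R_r^{(s)}(j,\xi)\,w^s$ for some $\xi$ between $0$ and $w$; since $\xi\in[0,\varepsilon]$, this is at most $\frac{w^s}{s!}\sup_{x\in[0,\varepsilon]}|R_r^{(s)}(j,x)|$ in absolute value, producing the first summand of the claimed bound. For the approximation term I would invoke Lemma \ref{lem:error-bound} verbatim, namely $\big|\frac{\pl(n+j)}{\pl(n)}-R_r(j,w)\big|\le R_r(j,w)\big|\frac{2L_r(w)}{1-L_r(w)}\big|$, and then extract a factor $w^s$ by rewriting the right-hand side as $w^s\cdot\big|R_r(j,w)\frac{2L_r(w)}{w^s(1-L_r(w))}\big|$, which is bounded above by $w^s\sup_{x\in[0,\varepsilon]}\big|R_r(j,x)\frac{2L_r(x)}{x^s(1-L_r(x))}\big|$ because $w\in[0,\varepsilon]$. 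Adding the two estimates, applying the triangle inequality, and defining $E_{r,s}(j,w)$ through $\frac{\pl(n+j)}{\pl(n)}-A_{r,s}(j,w)=E_{r,s}(j,w)\,w^s$ yields exactly \eqref{eq:tail-of-taylor-series}.

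The only genuine (and modest) obstacle is justifying that Taylor's theorem is legitimately applicable, i.e.\ that $R_r(j,\cdot)$ is $s$-times continuously differentiable on the whole interval $[0,\varepsilon]$. This reduces to checking that $\widehat{\pl}_r$ is smooth and nonvanishing there, so that both the outer ratio in the definition of $R_r(j,w)$ and its inner argument $w/(1+\frac{3^{3/2}\sqrt{A}}{2}jw^3)^{1/3}$ are well defined and smooth; the hypothesis $\varepsilon\le \frac{2^{1/3}}{\sqrt{3}A^{1/6}}$ is exactly what confines both arguments of $\widehat{\pl}_r$ to the admissible range on which Theorem \ref{thm:formula-for-pl-in-w} is in force. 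I would also note, for later use rather than for validity, that the second supremum is finite precisely when $s\le 2r+3$, since $L_r(x)=O(x^{2r+3})$ forces $L_r(x)/x^s$ to stay bounded as $x\to0$; this is the regime in which the bound will be applied. Everything beyond these points is the mechanical two-term bookkeeping described above, which is why the conclusion follows essentially immediately.
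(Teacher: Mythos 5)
Your proposal is correct and matches the paper's argument exactly: the paper proves this lemma in one line by citing Lemma \ref{lem:error-bound} together with Taylor's theorem, which is precisely the two-term decomposition you carry out. Your added remarks on the smoothness of $R_r(j,\cdot)$ on $[0,\varepsilon]$ and on the finiteness of the second supremum when $s\leq 2r+3$ are sensible bookkeeping that the paper leaves implicit.
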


In view of \eqref{eq:tail-of-taylor-series}, for each choice of $s$, and $2r+3\geq s$ we have that  $E_{r,s}(j,w)$ is bounded. From here onward we make the choice of 
\begin{equation}
	s=2d(d-1)+1 \text{ and } r=d(d-1).
\end{equation}
We also denote 
\begin{equation}
	\varepsilon := \varepsilon_{d(d-1),d}.
\end{equation}
It is easy to see that 
\begin{equation}
	0\leq \left|L_r(w)\right| <\frac{1}{2},\quad w\in[0,\varepsilon],
\end{equation}
so we get that 
\begin{equation}\label{eq:error-bound-on-ratio}
	\left| \dfrac{\pl(n+j)}{\pl(n)}-R_r(j,w) \right|\leq R_r(j,w)\left| 4L_r(w) \right|.
\end{equation}

To use Lemma \ref{lem:key-lemma} effectively, we need to obtain a bound on derivatives of $R_r(j,w)$.
\subsection{Bound on nth derivative of $R_r(j,w)$}
The polynomial $\D_{d,\pl,m}(n)$ is homogeneous of degree $2m-2$ in the coefficinets of $J^{d,n}_{\pl}(X)/\pl(n)$ and homogeneous of degree $m(m-1)$ in its roots. So, it has the form
\begin{equation}\label{eq:form-of-D_d,pl,m}
	\D_{d,\pl,m}(n)=\sum_{i_1+i_2+\cdots +i_{2m-2}=m(m-1)}A_{i_1,i_2,\cdots,i_{2m-2}}\cdot\prod_{k=1}^{2m-2} \binom{d}{i_k}\dfrac{\pl(n+d-i_k)}{\pl(n)}
\end{equation}
for some constants $A_{i_1,i_2,\cdots,i_{2m-2}}$. To bound the errors when we expand in terms of $w$, we find bounds on the derivatives $R_r^{(s)}(j, w)$ for $w$ in the interval $[0, \varepsilon]$. For convenience, let $t = t(j) := \frac{3^{3/2}\sqrt{A}}{2} j$.
\begin{lemma}\label{lem:nth-derivative-of-R_r}
	Assume that $w\in [0,\varepsilon]$ with $\varepsilon$ as above. Then for each $n\geq 1$, we have that
	\begin{align*}
	\left| R_r^{(n)}(j,w) \right|
	& \leq  n! \binom{n+3}{3} e^{g(w)} e^{(3tw^2+3tw)n}\cdot t^{\frac{7n}{3}}\cdot \left((r+1)^2\cdot 6213\cdot\alpha_{\frac{n}{2}}\Gamma\left(n+\dfrac{13}{12}+1\right)\right)^{n},
	\end{align*}
	where
	$
		g(w)=\frac{(1+tw^3)^{2/3}-1}{w^2}.
	$
\end{lemma}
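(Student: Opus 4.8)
The plan is to bound $R_r^{(n)}(j,w)$ by first understanding the structure of $R_r(j,w)$ as an explicit function, then applying a Fa\`a di Bruno / Leibniz-type estimate. Recall that
\[
R_r(j,w)=\frac{\widehat{\pl}_r\!\left(\frac{w}{(1+tw^3)^{1/3}}\right)}{\widehat{\pl}_r(w)},
\]
and that by Theorem \ref{thm:formula-for-pl-in-w} the function $\widehat{\pl}_r(w)$ factors as $e^{c+1/w^2}w^{25/12}$ times the polynomial $P_r(w):=\sum_{s,m}f_{s,m}w^{2s+2m}$. Writing $u=u(w):=w/(1+tw^3)^{1/3}$, the exponential and power-of-$w$ prefactors combine cleanly: the ratio $e^{1/u^2}u^{25/12}/(e^{1/w^2}w^{25/12})$ simplifies because $1/u^2=(1+tw^3)^{2/3}/w^2$, which is exactly $1/w^2+g(w)$ with $g(w)=\frac{(1+tw^3)^{2/3}-1}{w^2}$. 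Thus $R_r(j,w)=e^{g(w)}\cdot(1+tw^3)^{-25/36}\cdot \frac{P_r(u)}{P_r(w)}$. First I would record this clean decomposition, since it isolates the single genuinely transcendental factor $e^{g(w)}$ and reduces everything else to algebraic/rational pieces whose derivatives are easier to control.

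Next I would estimate the derivatives of each factor separately and recombine via the general Leibniz rule, which is the source of the binomial-type combinatorial prefactors in the claimed bound. For the factor $e^{g(w)}$, I would use Fa\`a di Bruno's formula: the key input is that $g(w)$ is analytic near $w=0$ (the apparent pole at $w=0$ cancels since $(1+tw^3)^{2/3}-1=O(w^3)$), with Taylor coefficients controlled by expanding $(1+tw^3)^{2/3}$ via the binomial series, giving coefficients comparable to powers of $t$. The factors $3tw^2+3tw$ appearing in the exponent $e^{(3tw^2+3tw)n}$ should arise from bounding $g$ and its derivatives in terms of $t$ and $w$ on the interval $[0,\varepsilon]$, while the $t^{7n/3}$ factor tracks the scaling $u\sim w\cdot(1+tw^3)^{-1/3}$ through the $w^{25/12}$-type exponents. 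For the rational factor $P_r(u)/P_r(w)$, I would bound the coefficients $f_{s,m}$ using the definitions of $\beta_s$, $b_{s,m}$, and $\alpha_s$; the quantity $\alpha_{n/2}\Gamma(n+13/12+1)$ and the constant $6213$ together with the $(r+1)^2$ factor should emerge from crudely bounding the number of nonzero coefficients (there are $(r+2)^2$ terms in the double sum) times the size of the largest coefficient, which is governed by the rapidly growing $\Gamma$ and $\alpha$ values.

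The main obstacle will be controlling the derivatives of the \emph{quotient} $P_r(u)/P_r(w)$ and of the composite $P_r(u(w))$ uniformly in $w\in[0,\varepsilon]$. Differentiating a quotient repeatedly produces denominators $P_r(w)^{n+1}$, so I must secure a lower bound $P_r(w)\geq P_r(0)/2$ (say) on the whole interval, which is exactly what the smallness of $\varepsilon=\varepsilon_{d(d-1),d}$ is engineered to guarantee — here I would invoke that $w\le\varepsilon$ forces every tail term $\sum_{(s,m)\neq(0,0)}|f_{s,m}|w^{2s+2m}$ to be dominated by the constant term $f_{0,0}$. For the composition $P_r(u)$ I would again use Fa\`a di Bruno, needing bounds on the derivatives of $u(w)=w(1+tw^3)^{-1/3}$, which are again binomial-series estimates in $t$. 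I expect the bookkeeping of how the various constants ($7n/3$ exponent on $t$, the $\Gamma(n+13/12+1)$, the combinatorial $\binom{n+3}{3}$) assemble to be delicate but routine once the three-factor decomposition and the uniform lower bound on $P_r(w)$ are in hand; the $\binom{n+3}{3}$ in particular should come from the number of ways to distribute derivatives across the (effectively four) factors in the Leibniz expansion. Finally I would collect all pieces and simplify to the stated closed form.
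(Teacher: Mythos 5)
Your proposal is correct and follows essentially the same route as the paper: the paper likewise factors $R_r(j,w)$ into the four pieces $e^{g(w)}$, $(1+tw^3)^{-25/36}$, $P_r(u)$, and $1/P_r(w)$, applies the multinomial Leibniz rule across them (whence the $\binom{n+3}{3}$), and uses Fa\`a di Bruno together with bounds on $f_{s,m}$ via $\beta_s$ and $b_{s,m}$ and a uniform bound $|1/P_r(w)|\leq 1/0.1485$ on $[0,\varepsilon]$ to assemble the stated constants. No substantive differences to report.
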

\begin{proof}
	The idea of the proof is to use the product rule to split up $R_r(j, w)$ into four more manageable parts and use Faà di Bruno’s formula for iterated applications of the chain rule to evaluate each part as needed. This formula says that for differentiable functions $f(x)$ and $g(x)$, we have
	\begin{equation}\label{eq:Faa-di-Bruno}
		\dfrac{d^n}{dx^n}f(g(x))=\sum_{m_1+ 2m_2+\cdots+nm_n=n}\dfrac{n!}{m_1!m_2!\cdots m_n!}f^{(m_1+m_2+\cdots+m_n)}(g(x))\prod_{j=1}^{n}\left( \dfrac{g^{(j)}(x)}{j!} \right)^{m_j}.
	\end{equation}
	First we define 
	\begin{align*}
		A:= & A(t,w) =e^{\frac{t^2w^4+2tw}{\left(\left(1+tw^3\right)^{2/3}+\frac{1}{2}\right)^2+\frac{3}{4}}}, \\
		 B:= & B(t,w) =\dfrac{1}{(1+tw^3)^{25/36}}, \\
		C:= & \Tilde{C}_r(t,w) =\sum_{s=0}^{r+1}\sum_{m=0}^{r+1}f_{s,m} \left(\dfrac{w}{(1+tw^3)^{1/3}}\right)^{2s+2m}, \\
		 D:= & D(t,w) =\dfrac{1}{\sum_{s=0}^{r+1}\sum_{m=0}^{r+1}f_{s,m} w^{2s+2m}}.
	\end{align*}
	Then we have that 
	\begin{equation}\label{eq:general-form-of-nth-derivative-of-R_r}
		R_r^{(m)}(j,w)= \sum_{m_1+m_2+m_3+m_4=m}\dfrac{m!}{m_1!m_2!m_3!m_4!}\left(\dfrac{d^{m_1}A}{dw^{m_1}}\right)\left(\dfrac{d^{m_2}B}{dw^{m_2}}\right)\left(\dfrac{d^{m_3}C}{dw^{m_3}}\right)\left(\dfrac{d^{m_4}D}{dw^{m_4}}\right).
	\end{equation}
	We will prove the bound on $n$-th derivative of $A$ in full detail, and others will follow similarly. We use \eqref{eq:Faa-di-Bruno} with $f(x)=e^x$ and $g(w)=\frac{t^2w^4+2tw}{\left(\left(1+tw^3\right)^{2/3}+\frac{1}{2}\right)^2+\frac{3}{4}}$, and find that
	\begin{equation}\label{eq:derivative-formula-for-A}
		\dfrac{d^nA}{dw^n}=\dfrac{d^n}{dw^n}f(g(w))=\sum_{m_1+2m_2+\cdots + n m_{n}=n} \dfrac{n!}{m_1!m_2!\cdots m_n!} e^{g(w)}\prod_{k=1}^{n}\left(\dfrac{g^{(k)}(w)}{k!}\right)^{m_k}.
	\end{equation}
	We write $g(w)=g_1(w)\cdot g_2(w)$, where $g_1(w)=t^2w^4+2tw$, and $g_2(w)=\frac{1}{\left(\left(1+tw^3\right)^{2/3}+\frac{1}{2}\right)^2+\frac{3}{4}}$. We find that
	\begin{equation}
		\left|g_1^{(n)}(w)\right|\leq 4! t^2,
	\end{equation}
	and again using \eqref{eq:Faa-di-Bruno}, we obtain
	\begin{align}
		\left| g_2^{(n)}(w) \right| \leq & \sum_{m_1+2m_2+\cdots+nm_n=n}\dfrac{n!}{m_1!m_2!\cdots m_n!}\dfrac{\left(\sum m_i\right)!}{\left(\left(\left(1+tw^3\right)^{2/3}+\frac{1}{2}\right)^2+\frac{3}{4}\right)^{\sum m_i}}\\ \nonumber
		 & \hspace{3cm} \times\prod_{k=1}^n \left( 2^{k+1} \left(\left(1+tw^3\right)^{\frac{2}{3}}+\frac{1}{2} \right) e^{(3tw^2+3tw)k} t^{\frac{k}{3}} \right)^{m_k}\\ \nonumber
		 \leq & \sum_{m_1+2m_2+\cdots+nm_n=n}\dfrac{n!}{m_1!m_2!\cdots m_n!}\dfrac{\left(\sum m_i\right)!}{\left(\left(\left(1+tw^3\right)^{2/3}+\frac{1}{2}\right)^2+\frac{3}{4}\right)^{\sum m_i}}\\ \nonumber
		 & \hspace{3cm} \times 2^{{\small \sum i m_i+m_i}} \left(\left(1+tw^3\right)^{\frac{2}{3}}+\frac{1}{2} \right)^{\sum m_i} e^{(3tw^2+3tw)\sum i m_i}\cdot t^{\frac{\sum i m_i}{3}}\\ \nonumber
		\leq & n! 2^{2n} e^{(3tw^2+3tw)n} t^{\frac{n}{3}} \cdot \sum_{m_1+2m_2+\cdots+nm_n=n}\dfrac{\left(\sum m_i\right)}{m_1!m_2!\cdots m_n!}\\ \nonumber
		\leq & n! 2^{3n} e^{(3tw^2+3tw)n} t^{\frac{n}{3}},
	\end{align}
	where we use the fact that the sum $\sum_{m_1+2m_2+\cdots+nm_n=n}1$ is  counting the number of ordered partitions of $n$. This gives us that
	\begin{align}\label{eq:bound-on-g}
		\left| g^{(n)}(w)\right| \leq & \sum_{m_1+m_2=n}\dfrac{n!}{m_1!m_2!}\cdot\dfrac{d^{m_1}g_1}{dw^{m_1}}\cdot \dfrac{d^{m_2}g_2}{dw^{m_2}} \\ \nonumber
		\leq & \sum_{m_1+m_2=n}\dfrac{n!}{m_1!m_2!} (4! t^2) m_2! 2^{3m_2} e^{(3tw^2+3tw)m_2} t^{\frac{m_2}{3}}\\ \nonumber
		\leq & 4! n! 2^{3m} t^{2+\frac{n}{3}}e^{(3tw^2+3tw)n}\sum_{\substack{m_1+m_2=n \\ m_1\leq 4}}\dfrac{1}{m_1!} \\ \nonumber
		\leq & n! 2^{3n} 41 \cdot t^{2+\frac{n}{3}} e^{(3tw^2+3tw)n}.
	\end{align}
	So, combining \eqref{eq:derivative-formula-for-A}$-$\eqref{eq:bound-on-g}, we obtain
	\begin{align}
		\left| \dfrac{d^{n}A}{dw^{n}} \right| \leq &  n! 2^{3n} (41)^n e^{g(w)}\cdot t^{2n+\frac{n}{3}} e^{(3tw^2+3tw)n} \sum_{m_1+2m_2+\cdots+nm_n=n}\dfrac{(m_1+m_2\cdots m_n)!}{m_1!m_2!\cdots m_n} \\		\nonumber
											\leq & e^{g(w)}n! e^{(3tw^2+3tw)n} 2^{4n} (41)^n t^{\frac{7}{3}n}.
	\end{align}
	Next, it can be shown that 
	\begin{equation}
		\left| \dfrac{d^n B}{dw^n} \right| \leq n! e^{(3tw^2+3tw)n} t^{\frac{n}{3}}.
	\end{equation}
	Now, we give bounds on $C$ and	$D$. First, we let $W(w)=\frac{w}{(1+tw^3)^{1/3}},$ then we have 
	\begin{align}
		\left| W^{(n)}(w) \right| \leq & w\dfrac{d^n}{dw^n}\left(\dfrac{1}{(1+tw^3)^{1/3}}\right) + n\dfrac{d^{n-1}}{dw^{n-1}}\left(\dfrac{1}{(1+tw^3)^{1/3}}\right).
	\end{align}
	Direct calculation gives
	\begin{align*}
		\left|\dfrac{d^n}{dw^n} \left(\dfrac{1}{(1+tw^3)^{1/3}}\right) \right| \leq & \left|\sum_{m_1+2m_2+3m_3=n}\dfrac{n!}{m_1!m_2!m_3!}\binom{-1/3}{m_1+m_2+m_3}\dfrac{(3tw^2)^{m_2}(3tw)^{m_2}(t)^{m_3}}{(1+tw^3)^{\frac{1}{3}+m_1+m_2+m_3}}\right| \\
		\leq & n! e^{3tw^2+3tw} t^{\frac{n}{3}}.
	\end{align*}
	Since $|w|<1$, this implies that
	\begin{equation}
		\left| W^{(n)}(w) \right|\leq 2 n! e^{3tw^2+3tw} t^{\frac{n}{3}}.
	\end{equation}
	The definition of $C$ the gives
	\begin{align}
		\left| \dfrac{d^n C}{dw^n} \right| \leq & \Bigg|\sum_{\substack{s=0\\ 2s+2m\geq n}}^{r+1}\sum_{m=0}^{r+1}  f_{s,m}  \dfrac{(2s+2m)!}{n!} \\ \nonumber
		& \times \sum_{m_1+2m_2+\cdots+nm_n=n}\dfrac{n!}{m_1!m_2!\cdots m_n!}W^{2s+2m-n} \prod_{k=1}^n\left( 2 e^{3tw^2+3tw} t^{\frac{k}{3}} \right)^{m_k}\Bigg|\\ \nonumber
		\leq & 2^{2n} n! e^{(3tw^2+3tw)n} t^{\frac{n}{3}} (r+1)^2 \sum_{2s+2m=n}\left| f_{s,m} \right|.
	\end{align}
	Here in the second inequality, we use that for $w\in[0,\varepsilon]$, the sum on right hand side is dominated by constant term. Now we look into $C$ and $D$, which we need to bound $f_{s,m}$. First notice that $|\beta_s|\leq 1$ for $s\leq 54$, and for $s\geq 55$, we have
	\begin{align}
		\left|\beta_{s}\right|  & = \left|\dfrac{1}{s!} \sum_{m_1+2m_2+\cdots sm_s=s}\dfrac{s!}{m_1!m_2!\cdots m_s!}\prod_{k=1}^{s}(-\alpha_k)^{m_k} \right| \\ \nonumber
								& = \left| -\alpha_s + \sum_{m_1+2m_2+\cdots (s-1)m_{s-1}=s}\dfrac{s!}{m_1!m_2!\cdots m_s!}\prod_{k=1}^{s-1}(-\alpha_k)^{m_k} \right|
								 \leq \alpha^s,
	\end{align}
	since there is an alternating signs in each terms and $\alpha_s$ dominates. Also, we have 
	\begin{align}
		|b_{s,m}| = & \left|\dfrac{1}{(2m)!}\dfrac{d^{2m}\,\,}{dy^{2m}}\left(\dfrac{(1+y)^{2s+2m+\frac{13}{12}}}{(3+2y)^{\frac{1}{2}}}\right)\Bigg|_{y=0}\right| \\ \nonumber
		\leq & \dfrac{1}{(2m)!}\sum_{m_1+m_2=2m}\dfrac{(m_1+m_2)!}{m_1!m_2!}\Gamma\left(2s+2m-m_1+\frac{13}{12}+1\right) \Gamma\left(-\frac{1}{2}-m_2+1\right)\left(\dfrac{3}{2}\right)^{m_2} \sqrt{3} \\ \nonumber
		\leq & \dfrac{1}{(2m)!}\Gamma\left(2s+2m+\frac{13}{12}+1\right) \Gamma\left(\frac{1}{2}\right)\sqrt{3} \sum_{m_1+m_2=2m}\dfrac{(2m)!}{m_1!m_2!}\left(\frac{3}{2}\right)^{m_2} \\ \nonumber
		\leq &\dfrac{1}{(2m)!} \sqrt{3}\cdot \Gamma\left(2s+2m+\frac{13}{12}+1\right)\cdot \Gamma\left(\frac{1}{2}\right) \cdot \left( 1+\dfrac{3}{2} \right)^{2m} \\ \nonumber
	\leq & \dfrac{1}{(2m)!} \sqrt{3} \cdot \dfrac{5^{2m}}{2^{2m}} \cdot \Gamma\left(2s+2m+\frac{13}{12}+1\right)\cdot \Gamma\left(\frac{1}{2}\right).
	\end{align}
	So, we get that 
	\begin{align}
		\sum_{2s+2m=n}\left| f_{s,m} \right| & \leq 5\cdot 3^{\frac{n}{2}+\frac{25}{24}+\frac{1}{2}}\cdot\Gamma\left( n+\frac{13}{12}+1 \right) \sqrt{\pi} A^{\frac{13}{24}}\alpha_{\frac{n}{2}}^n \left(\dfrac{25}{4}\right)^{n/2}.
	\end{align}
	Combining these facts we obtain
	\begin{align}
		\left| \dfrac{d^n C}{dw^n} \right| & \leq 2^{2n} n! e^{(3tw^2+3tw)n} t^{\frac{n}{3}} (r+1)^2 \left( \dfrac{75}{A}\right)^{\frac{n}{2}} 3^{\frac{37}{24}} 5\Gamma\left(n+\frac{13}{12}+1\right)\sqrt{\pi} A^{\frac{13}{12}}\alpha_{\frac{n}{2}}^n.
	\end{align}
	Using a similar argument and the fact that $|D|\leq \frac{1}{0.1485}$ when $w\in [0,\varepsilon]$ using Mathematica , we get that
	\begin{equation}
		\left| \dfrac{d^n D}{dw^n} \right| \leq \dfrac{n!}{0.1485} \left(\dfrac{(r+1)^2\cdot \sqrt{75}\cdot \sqrt{\pi} 3^{\frac{37}{24}}\cdot 5\cdot A^{\frac{13}{24}}\cdot 2\alpha_{\frac{n}{2}}}{0.1485}\right)^n\Gamma\left(n+\dfrac{13}{12}+1\right)^n.
	\end{equation}
	Therefore, thanks to \eqref{eq:general-form-of-nth-derivative-of-R_r} we obtain
	\begin{align}
		\left| R_r^{(n)}(j,w)\right|\leq & \sum_{m_1+\cdots+m_4=n}\dfrac{m!}{m_1!\cdots m_4!} e^{g(w)}m_1! e^{(3tw^2+3tw)m_1} 2^{4m_1} (41)^{m_1} t^{\frac{7}{3}m_1}\cdot m_2! \\ \nonumber
		& \times e^{(3tw^2+3tw)m_2} t^{\frac{m_2}{3}} 2^{2m_3} m_3! e^{(3tw^2+3tw)m_3} t^{\frac{m_3}{3}} (r+1)^2 \left( \dfrac{75}{A}\right)^{\frac{m_3}{2}} 3^{\frac{37}{24}} \\ \nonumber
		&\times 5 \cdot \Gamma\left(m_3+\frac{13}{12}+1\right)\sqrt{\pi} A^{\frac{13}{12}}\alpha_{\frac{m_3}{2}}^{m_3} \dfrac{m_4!}{0.1485} \cdot\Gamma\left(m_4+\dfrac{13}{12}+1\right)^{m_4}  \\ \nonumber
		&   \times \left(\dfrac{(r+1)^2\cdot \sqrt{75}\cdot \sqrt{\pi} 3^{\frac{37}{24}}\cdot 5\cdot A^{\frac{13}{24}}\cdot 2\alpha_{\frac{m_4}{2}}}{0.1485}\right)^{m_4}\\ \nonumber
		\leq & n! e^{g(w)} e^{(3tw^2+3tw)n}\cdot t^{\frac{7n}{3}}\cdot \left(\dfrac{(r+1)^2\cdot \sqrt{75}\cdot \sqrt{\pi} 3^{\frac{37}{24}}\cdot 5\cdot A^{\frac{13}{24}}\cdot 2\alpha_{\frac{n}{2}}}{0.1485}\right)^{n} \\ \nonumber
		& \times \Gamma\left(n+\dfrac{13}{12}+1\right)^{n} \cdot \sum_{m_1+\cdots+m_4=n}1 \\ \nonumber
		\leq  n! \binom{n+3}{3} e^{g(w)}& e^{(3tw^2+3tw)n}\cdot t^{\frac{7n}{3}}\cdot \left(\dfrac{(r+1)^2\cdot \sqrt{75}\cdot \sqrt{\pi} 3^{\frac{37}{24}}\cdot 5\cdot A^{\frac{13}{24}}\cdot 2\alpha_{\frac{n}{2}}}{0.1485}\Gamma\left(n+\dfrac{13}{12}+1\right)\right)^{n} \\ \nonumber
		\leq  n! \binom{n+3}{3} e^{g(w)}& e^{(3tw^2+3tw)n}\cdot t^{\frac{7n}{3}}\cdot \left((r+1)^2\cdot 6213\cdot\alpha_{\frac{n}{2}}\cdot\Gamma\left(n+\dfrac{13}{12}+1\right)\right)^{n}.
	\end{align}
\end{proof}
Thanks to \eqref{eq:form-of-D_d,pl,m}, we want to estimate products of ratios of plane partition function values. Given $\uu{i}=(i_1,i_2,\cdots,i_{2m-2})$ with $i_1+i_2+\cdots+i_{2m-2}=m(m-1)$, let $T_{d,\pl,m}(\uu{i};w)$ be the degree $2m(m-1)$ Taylor polynomial of $\prod_{k=1}^{2m-2}R_r(d-i_k,w)$.
\begin{lemma}\label{lem:taylor-exp-of-ratios-of-planepartition}
	If  $w\in [0,\varepsilon]$, then we have that 
	\[
		\prod_{k=1}^{2m-2} \dfrac{\pl(n+d-i_k)}{\pl(n)}=T_{d,\pl,m}(\uu{i};w)+E_{d,\pl,m}(\uu{i};w) w^{2m(m-1)+1},
	\]
	where
	\begin{align*}
		\left| E_{d,\pl,m}(\uu{i};w) \right| & 
		   \leq    2\cdot \left( e^{\frac{\Gamma(2d^2)}{(2\pi)^{2d^2+2}}}6^{2d-2} (6A)^r (r+1)(2d-2) \right)^{\frac{12}{13}} \left(0.1485\cdot 2^{12} (3A)^{3}\pi^3 \right)^{\frac{12}{13}}.
	\end{align*}
\end{lemma}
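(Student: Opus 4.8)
The plan is to prove the stated bound by separating the total error into two pieces that mirror the two terms in the single–ratio estimate \eqref{eq:tail-of-taylor-series} of Lemma \ref{lem:key-lemma}: a ``truncation'' error coming from replacing $\prod_{k=1}^{2m-2}R_r(d-i_k,w)$ by its degree $2m(m-1)$ Taylor polynomial $T_{d,\pl,m}(\uu{i};w)$, and a ``ratio'' error coming from replacing each true ratio $\pl(n+d-i_k)/\pl(n)$ by its model $R_r(d-i_k,w)$. Concretely, I would write
\[
\prod_{k=1}^{2m-2}\frac{\pl(n+d-i_k)}{\pl(n)} - T_{d,\pl,m}(\uu{i};w) = \left(\prod_{k}\frac{\pl(n+d-i_k)}{\pl(n)} - \prod_k R_r(d-i_k,w)\right) + \left(\prod_k R_r(d-i_k,w) - T_{d,\pl,m}(\uu{i};w)\right),
\]
and bound each parenthesis by a constant multiple of $w^{2m(m-1)+1}$. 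Throughout I use the standing choices $r=d(d-1)$ and $s=2d(d-1)+1$, so that $2m(m-1)+1\le s$ for all $m\le d$ while $2r+3\ge s$; these guarantee simultaneously that the Taylor polynomial has high enough degree and that the ratio error is genuinely of higher order than $w^{s}$.

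For the truncation error I would apply Taylor's theorem with Lagrange remainder to the smooth function $\prod_k R_r(d-i_k,w)$ on $[0,\varepsilon]$, so this error is at most $\frac{1}{(2m(m-1)+1)!}\sup_{x\in[0,\varepsilon]}\big|\frac{d^{2m(m-1)+1}}{dx^{2m(m-1)+1}}\prod_k R_r(d-i_k,x)\big|\cdot w^{2m(m-1)+1}$. Expanding the derivative by the general Leibniz (multinomial) rule, the prefactor $\frac{1}{(2m(m-1)+1)!}$ cancels the multinomial coefficient, reducing the task to bounding $\sum_{n_1+\cdots+n_{2m-2}=2m(m-1)+1}\prod_k \frac{1}{n_k!}\,|R_r^{(n_k)}(d-i_k,x)|$, where each derivative is controlled by Lemma \ref{lem:nth-derivative-of-R_r}. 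Since the bound there grows like $(\alpha_{n/2}\Gamma(n+\tfrac{13}{12}+1))^{n}$, which is convex in the order, the dominant contribution comes from concentrating all derivatives on a single index ($n_k=2m(m-1)+1$, the rest $0$), the remaining factors $R_r(d-i_k,x)$ being $O(1)$ on $[0,\varepsilon]$. The key estimate is then to show that for $d\ge 4$ the super-exponential product $\big((r+1)^2\cdot 6213\cdot\alpha_{n/2}\Gamma(n+\tfrac{13}{12}+1)\big)^{n}$ with $n\le 2m(m-1)+1<2d^2$ is dominated by the doubly-exponential factor $e^{\Gamma(2d^2)/(2\pi)^{2d^2+2}}$; using $\alpha_{s}\asymp \Gamma(2s+2)/(2\pi)^{4s+2}$ together with Stirling, the logarithm of the former is only polynomially exponential in $d$ whereas the exponent of the latter is genuinely exponential, so the domination holds uniformly. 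This is exactly the factor displayed in the statement, and the powers of $t(d-i_k)\le\frac{3^{3/2}\sqrt{A}}{2}d$ and the remaining constants assemble into the $6^{2d-2}$, $(6A)^r$, $(r+1)$ and $(3A)^3,\pi^3,2^{12}$ factors.

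For the ratio error I would use the telescoping identity $\prod_k a_k-\prod_k b_k=\sum_{l}\big(\prod_{k<l}a_k\big)(a_l-b_l)\big(\prod_{k>l}b_k\big)$ with $a_k=\pl(n+d-i_k)/\pl(n)$ and $b_k=R_r(d-i_k,w)$. Each difference is controlled by \eqref{eq:error-bound-on-ratio}, namely $|a_l-b_l|\le R_r(d-i_l,w)\,|4L_r(w)|$, and by \eqref{eq:upper-bound-on-L_r} we have $L_r(w)=O(w^{2r+3})$; since the surviving $2m-2$ factors are each $O(1)$ on $[0,\varepsilon]$, the whole sum is at most $(2m-2)$ times a constant multiple of $w^{2r+3}$. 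Because $2r+3-s=2>0$, dividing by $w^{s}$ and taking the supremum over $[0,\varepsilon]$ turns $w^{2r+3-s}$ into $\varepsilon^{2}$; substituting the explicit $\varepsilon=\varepsilon_{d(d-1),d}$ shows this contribution is of strictly smaller order than the truncation error and is harmlessly absorbed, and this is where the extra $(2m-2)$ factor in the final bound originates.

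The main obstacle is the last estimate of the second paragraph: making the comparison between the super-exponential $\alpha$- and $\Gamma$-factors from Lemma \ref{lem:nth-derivative-of-R_r} and the doubly-exponential factor $e^{\Gamma(2d^2)/(2\pi)^{2d^2+2}}$ completely explicit and uniform for all $d\ge 4$ and all $2\le m\le d$, rather than merely asymptotic. Tracking the exact powers — in particular turning the $\Gamma(n+\tfrac{13}{12}+1)^{n}$ growth into the clean exponent $\tfrac{12}{13}$ and reconciling every numerical constant with those hard-wired into $\varepsilon_{r,d}$ — is the delicate bookkeeping; everything else (Taylor's theorem, the Leibniz expansion, and the telescoping) is routine once Lemmas \ref{lem:error-bound} and \ref{lem:nth-derivative-of-R_r} are in hand.
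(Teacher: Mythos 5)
Your proposal is correct and follows essentially the same route as the paper: split the error into the Taylor-truncation of $\prod_k R_r(d-i_k,w)$ (bounded via the Leibniz rule and Lemma \ref{lem:nth-derivative-of-R_r}) and the ratio-replacement error (bounded via Lemma \ref{lem:error-bound} and \eqref{eq:upper-bound-on-L_r}, contributing the $(2d-2)$ factor and, after substituting $\varepsilon$, the displayed constant), then majorize both pieces by the displayed quantity to get the factor $2$. The only cosmetic differences are that the paper controls the product difference multiplicatively via $\bigl(1+\tfrac{2L_r}{1-L_r}\bigr)^{2m-2}-1$ rather than by telescoping, bounds the Leibniz sum by $(5m^5)^{2m-2}$ rather than by concentrating derivatives on one index, and phrases the final comparison as the ratio-error bound dominating the truncation bound (the opposite of your narration), but since each piece is compared to the same displayed quantity the conclusion is identical.
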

\begin{proof}
By Lemma \ref{lem:key-lemma}, we can write that
	\[
		\prod_{k=1}^{2m-2} \dfrac{\pl(n+d-i_k)}{\pl(n)} = \prod_{k=1}^{2m-2}R_r(d-i_k,w)(1+U_{r,k}(w)) = \prod_{k=1}^{2m-2}R_r(d-i_k,w) +U_r(w),
	\]
	where
	\begin{align*}
		\left|U_r(w)\right| \leq &\prod_{k=1}^{2m-2}R_r(d-i_k,w)\left( \left(1+\dfrac{2L_r(w)}{1-L_r(w)}\right)^{2m-2}-1 \right) \\
		\leq & 2^{2m-2}\cdot(2m-2)\cdot 3^{2m-2} \cdot \left| \dfrac{2L_r(w)}{1-L_r(w)} \right| 
		\leq 2^{2m-2}\cdot(2m-2)\cdot 3^{2m-2} \left|4\cdot L_r(w)\right| \\
		\leq & 2^{2m-2}\cdot(2m-2)\cdot 3^{2m-2} 16\dfrac{\sqrt{3A}}{0.1485}\cdot C_r\cdot 2^{r+7+\frac{1}{24}} \pi^3  (3A)^{r+\frac{49}{24}} (r+2) w^{2r+3}.
	\end{align*}
	Here we use that $\left| R_r(d-i_k,w) \right| \leq 2$ throughout and \eqref{eq:upper-bound-on-L_r} in the last inequality. If we choose $s=2m(m-1)+1$, then we have that
	\begin{align}
		\left|\dfrac{U_r(w)}{w^s}\right| &\leq 2^{2m-2}\cdot(2m-2)\cdot 3^{2m-2} 16\dfrac{\sqrt{3A}}{0.1485}\cdot C_r\cdot 2^{r+7+\frac{1}{24}} \pi^3  (3A)^{r+\frac{49}{24}} (r+2) \cdot \varepsilon^{2}\\ \nonumber
		& \leq  6^{2d-2}\cdot(2d-2)\cdot \dfrac{\sqrt{3A}}{0.1485}\cdot C_r\cdot 2^{r+11+\frac{1}{24}} \pi^3  (3A)^{r+\frac{49}{24}} (r+2)  \\ \nonumber
		&  \times \left( e^{\frac{\Gamma(2d^2)}{(2\pi)^{2d^2+2}}}6^{2d-2} (6A)^r (r+1) \right)^{-\frac{2}{3}} \left(7 2^{12} (3A)^{3} \right)^{-\frac{2}{3}}  \\ \nonumber
		& \leq  \left( e^{\frac{\Gamma(2d^2)}{(2\pi)^{2d^2+2}}}6^{2d-2} (6A)^r (r+1)(2d-2) \right)^{\frac{1}{3}} \left(0.1485\cdot 2^{12} (3A)^{3}\pi^3 \right)^{\frac{1}{3}} ,
	\end{align}
	where we get the last inequality from \eqref{eq:C_r}.  On the other hand, using the product rule and Lemma \ref{lem:nth-derivative-of-R_r} we obtain
	\begin{align}
		\dfrac{1}{s!}\Bigg| \dfrac{d^s}{dw^s}&\prod_{k=1}^{2m-2} R_r(d-i_k,w) \Bigg| \\ \nonumber
		 & \leq  e^{(2m-2)g(\varepsilon)}\left(e^{3t\varepsilon+3t\varepsilon^2}t^{\frac{7}{3}}(r+1)^2\cdot 6213\cdot \alpha_{\frac{s}{2}}\Gamma\left(s+\dfrac{13}{12}+1\right)\right)^s \\ \nonumber
		& \times \sum_{n_1+n_2+\cdots+n_{2m-2}=m(m-1)}\binom{n_1+3}{3}\binom{n_2+3}{3}\cdots\binom{n_{2m-2}+3}{3}.
	\end{align}
	The largest term in the sum on the right hand side occurs if each $n_i$ is equal, which in turn is bounded by replacing each $n_i$ with $m\geq \frac{m(m-1)}{2m-2}$. Counting the number of terms, we see that the sum is bounded above by
	\begin{align*}
		\sum_{n_1+n_2+\cdots+n_{2m-2}=2m(m-1)}\binom{n_1+3}{3}\binom{n_2+3}{3}& \cdots  \binom{n_{2m-2}+3}{3}  \\
		& \leq \binom{m+4}{3}^{2m-2} \binom{2(m-1)(m+1)}{2m-3} \\
		& \leq \left(\dfrac{5}{2}m^3\right)^{2m-2}(2m^2)^{2m-2} = (5m^5)^{2m-2}.
	\end{align*}
	This shows that
	\begin{align*}
		\Bigg| \dfrac{d^s}{dw^s}\prod_{k=1}^{2m-2} R_r(d-i_k,w) & -T_{d,\pl,m}(\uu{i};w)\Bigg| \\
		 & \leq   e^{(2m-2)g(\varepsilon)}\left(e^{3t\varepsilon+3t\varepsilon^2}t^{\frac{7}{3}}(r+1)^2\cdot 6213\cdot \alpha_{\frac{s}{2}}\right)^s \cdot \\
		 & \qquad\qquad\qquad \times\Gamma\left(s+\dfrac{13}{12}+1\right)^s\cdot(5m^5)^{2m-2}\cdot w^s + \\
		 & + \left( e^{\frac{\Gamma(2d^2)}{(2\pi)^{2d^2+2}}}6^{2d-2} (6A)^r (r+1)(2d-2) \right)^{\frac{1}{3}} \left(0.1485\cdot 2^{12} (3A)^{3}\pi^3 \right)^{\frac{1}{3}}w^s \\
		 & \leq  2\cdot \left( e^{\frac{\Gamma(2d^2)}{(2\pi)^{2d^2+2}}}6^{2d-2} (6A)^r (r+1)(2d-2) \right)^{\frac{1}{3}} \left(0.1485\cdot 2^{12} (3A)^{3}\pi^3 \right)^{\frac{1}{3}} w^s,
	\end{align*}
	where the above inequality follows by noticing that the second part of the sum is larger of the two. This is true since the second function has exponential growth rate and the first one has polynomial growth in the factorial, so we just need to check when second part becomes bigger than first one, which happens when $d\geq 4$.
\end{proof}
In order to finish bounding the monomials in \eqref{eq:form-of-D_d,pl,m}, we need the following result proved in a similar way as \cite[Lemma 4.3]{larson2019}.

\begin{lemma}\label{lem:bound-on-binomial}
	Suppose $0\leq m\leq d$ and $i_1+i_2+\cdots+i_{2m-2}=m(m-1)$ for positive integers $i_k$. Then we have that
	\[
		\left|\left(\dfrac{2}{\sqrt{3A}}\right)^{m(m-1)}\prod_{k=1}^{2m-2}\binom{d}{i_k}\right|\leq \left(\frac{4e}{\sqrt{3A}}\right)^{d(d-1)}.
	\]
\end{lemma}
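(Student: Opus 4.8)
The plan is to split the left-hand side into its two natural pieces — the scalar power $\left(\frac{2}{\sqrt{3A}}\right)^{m(m-1)}$ and the product of binomial coefficients — and bound each separately against the corresponding factor of the right-hand side, using throughout that $m\le d$ forces $m(m-1)\le d(d-1)$. The organizing identity is
\[
	\left(\frac{4e}{\sqrt{3A}}\right)^{d(d-1)}=\left(\frac{2}{\sqrt{3A}}\right)^{d(d-1)}\cdot (2e)^{d(d-1)},
\]
so it suffices to prove the two inequalities $\left(\frac{2}{\sqrt{3A}}\right)^{m(m-1)}\le \left(\frac{2}{\sqrt{3A}}\right)^{d(d-1)}$ and $\prod_{k=1}^{2m-2}\binom{d}{i_k}\le (2e)^{d(d-1)}$, and then multiply them (all quantities being positive).

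First I would handle the scalar factor. Since $A=\zeta(3)\approx 1.202<\frac{4}{3}$, we have $3A<4$, so the base satisfies $\frac{2}{\sqrt{3A}}>1$. A base larger than $1$ is increasing in its exponent, and $m(m-1)\le d(d-1)$, so the first inequality is immediate. For the product, I would use the elementary bound $\binom{d}{i}\le \sum_{j}\binom{d}{j}=2^d$, valid for every index $i$. There are exactly $2m-2$ factors, hence $\prod_{k=1}^{2m-2}\binom{d}{i_k}\le (2^d)^{2m-2}=4^{d(m-1)}$. Using $m-1\le d-1$ and then $4<2e$ (as $2e\approx 5.44$), this gives $4^{d(m-1)}\le 4^{d(d-1)}\le (2e)^{d(d-1)}$. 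Multiplying the two bounds and recombining via the identity above yields the lemma.

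The genuine obstacle here — and the reason a little care is needed — is the mismatch between the exponent $m(m-1)$ appearing on the left and the target exponent $d(d-1)$ on the right: one must be sure that the extremal case $m=d$ dominates uniformly over $2\le m\le d$. The crude separation above makes this transparent precisely because each of the two resulting factors is individually monotone in $m$, so no optimization over $m$ is required. A sharper route, mirroring \cite[Lemma 4.3]{larson2019} more closely, would instead estimate $\binom{d}{i_k}\le (ed/i_k)^{i_k}$ and invoke convexity of $x\log x$ — so that $\sum_k i_k\log i_k$ is minimized when the $i_k$ equal their common average $\frac{m(m-1)}{2m-2}=\frac{m}{2}$ — to obtain $\prod_{k}\binom{d}{i_k}\le \left(\frac{2ed}{m}\right)^{m(m-1)}$. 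That approach would then leave one to verify that $m\mapsto \left(\frac{4ed}{m\sqrt{3A}}\right)^{m(m-1)}$ attains its maximum over $2\le m\le d$ at the endpoint $m=d$, which is a true but slightly delicate monotonicity check. I would present the crude argument, since it delivers exactly the claimed constant $\frac{4e}{\sqrt{3A}}$ while sidestepping this optimization entirely.
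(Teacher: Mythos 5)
Your proof is correct, and it takes a genuinely different route from the paper's. The paper keeps the scalar and the binomial product coupled: it first argues (via log-concavity of $i\mapsto\binom{d}{i}$) that the product is maximized when every $i_k$ equals the average $\tfrac{m}{2}$, applies $\binom{d}{i}\le (ed/i)^i$ to get $\prod_k\binom{d}{i_k}\le\left(\tfrac{2ed}{m}\right)^{m(m-1)}$, and then asserts that $m\mapsto\left(\tfrac{4ed}{\sqrt{3A}\,m}\right)^{m(m-1)}$ is maximized at $m=d$ — exactly the monotonicity check you flag as delicate (it does hold: the base stays above $\tfrac{4e}{\sqrt{3A}}\approx 5.7$, so the derivative of $m(m-1)\log(\cdot)$ is positive, but the paper does not spell this out). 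Your version decouples the two factors, uses the crude bound $\binom{d}{i}\le 2^d$ to get $4^{d(m-1)}\le(2e)^{d(d-1)}$, and uses $3A=3\zeta(3)<4$ to make the scalar factor monotone in its exponent; each piece is then individually monotone in $m$, so no optimization is needed. What the paper's route buys is a sharper intermediate bound $\left(\tfrac{4ed}{\sqrt{3A}\,m}\right)^{m(m-1)}$ for each fixed $m$ (potentially useful in the explicit small-$d$ computations); what your route buys is transparency — the extremal case $m=d$ dominates for trivially verifiable reasons — while landing on exactly the same constant $\tfrac{4e}{\sqrt{3A}}$, which is all that the proof of Theorem \ref{thm:main_theorem_general_case} consumes.
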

\begin{proof}
	The product $\prod_{k=1}^{2m-2}\binom{d}{i_k}$ is maximized when all of $i_k$ are equal and equal to $\frac{m}{2}$. Using standard bounds on binomial coefficients, we therefore have that 
	\[
	\left(\dfrac{2}{\sqrt{3A}}\right)^{m(m-1)}\prod_{k=1}^{2m-2}\binom{d}{i_k}\leq \left(\frac{4ed}{\sqrt{3A}m}\right)^{m(m-1)} \leq \left(\frac{4e}{\sqrt{3A}}\right)^{d(d-1)},
	\]
	since the right hand side is maximized when $m=d$.
\end{proof}

We need one more lemma which gives the necessary bounds on the coefficients $A_{i_1,i_2,\cdots,i_{2m-2}}$ to achieve the required bound on $\D_{d,\pl,m}(n)$.
\begin{lemma}[\cite{larson2019}, Lemma 4.4]\label{lem:bound-on-A-i}
	If $n> \frac{2}{\sqrt{3A}\varepsilon^3}$ and $A_{i_1,i_2,\cdots,i_{2m-2}}$ is as in \eqref{eq:form-of-D_d,pl,m}, then we have 
	\[
		\sum_{i_1,i_2,\cdots,i_{2m-2}}|A_{i_1,i_2,\cdots,i_{2m-2}}|\leq d^{2d}\cdot 2^{d(d-1)}.
	\]
\end{lemma}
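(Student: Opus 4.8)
The plan is to recognize that the constants $A_{i_1,\dots,i_{2m-2}}$ are \emph{universal}: they are determined entirely by the subdiscriminant polynomial $D_{d,m}$ together with the binomial weights $\binom{d}{i_k}$, and in particular do not depend on $n$ nor on the sequence $\pl$. Writing $a_j=\binom{d}{j}\,\pl(n+j)/\pl(n)$ for the coefficients of the normalized Jensen polynomial and setting $j_k=d-i_k$, the identity \eqref{eq:form-of-D_d,pl,m} is simply the expansion of the fixed integer polynomial $D_{d,m}(a_0,\dots,a_d)$ in the monomials $a_{j_1}\cdots a_{j_{2m-2}}$, with the factors $\binom{d}{i_k}=\binom{d}{j_k}$ stripped off. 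Consequently the hypothesis $n>\frac{2}{\sqrt{3A}\,\varepsilon^3}$ (equivalently $w(n)\in[0,\varepsilon]$) plays no role in the value of $\sum_{\uu{i}}|A_{\uu{i}}|$; it only records the regime in which this expansion is the relevant one. Since $D_{d,m}$ and the weights $\binom{d}{i_k}$ are literally the same objects that occur for $p(n)$, the estimate is identical to that of Larson and Wagner, and I would either cite \cite[Lemma 4.4]{larson2019} directly or reproduce their argument as follows.

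First I would reduce the claim to a purely combinatorial estimate on $D_{d,m}$. Grouping the tuples $\uu{i}$ by their underlying monomial and using $\binom{d}{d-i}=\binom{d}{i}$, one checks that the binomial weights cancel and the multinomial bookkeeping is harmless, so that $\sum_{\uu{i}}|A_{\uu{i}}|=\sum_{\mu}|C_\mu|$, the $\ell^1$-norm of the integer coefficients $C_\mu$ of $D_{d,m}$ regarded as a homogeneous degree-$(2m-2)$ polynomial in $a_0,\dots,a_d$. Moreover, writing $a_{d-l}=a_d\cdot(\pm e_l)$ for the elementary symmetric functions $e_l$ of the roots, the prefactor $a_d^{2m-2}$ exactly clears the denominators in $\Delta_m$, and this substitution sends distinct $e$-monomials to distinct $a$-monomials with the same coefficient magnitude. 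Hence $\sum_\mu|C_\mu|$ equals the $\ell^1$-norm of the Hankel determinant $\Delta_m$ expressed in the elementary-symmetric basis, and it suffices to prove $\|\Delta_m\|_{1,e}\le d^{2d}\,2^{d(d-1)}$ for every $2\le m\le d$.

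To bound this I would use the subset representation $\Delta_m=\sum_{i_1<\cdots<i_m}\prod_{a<b}(\lambda_{i_a}-\lambda_{i_b})^2$. By submultiplicativity of the coefficient $\ell^1$-norm under products, each summand satisfies $\big\|\prod_{a<b}(\lambda_{i_a}-\lambda_{i_b})^2\big\|_1\le 4^{\binom{m}{2}}=2^{m(m-1)}$ in the monomial basis of the $\lambda$'s, and summing over the $\binom{d}{m}$ subsets gives $\|\Delta_m\|_{1,\mathrm{mon}}\le \binom{d}{m}\,2^{m(m-1)}$. Passing from the monomial-symmetric representation to the elementary-symmetric basis via the fundamental theorem of symmetric functions (equivalently, Newton's identities) inflates the norm by at most a $d^{O(m)}$ factor coming from the number of $e$-monomials of weight $m(m-1)$ and the sizes of the transition coefficients. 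Combining $\binom{d}{m}\le 2^d$, $2^{m(m-1)}\le 2^{d(d-1)}$, and this conversion factor, and simplifying, yields the stated bound $d^{2d}\,2^{d(d-1)}$.

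The main obstacle is the final step: controlling the conversion from the root (monomial-symmetric) description of $\Delta_m$ to the elementary-symmetric basis while keeping both the number of terms and the transition coefficients small enough to collapse into the clean exponential bound, and checking en route that the apparent $a_d^{m(m-1)}$ denominators really do reduce to $a_d^{2m-2}$. Everything preceding it — the universality observation and the reduction $\sum_{\uu{i}}|A_{\uu{i}}|=\|\Delta_m\|_{1,e}$ — is formal, so the substance of the proof is precisely the counting estimate already carried out for the partition function in \cite{larson2019}.
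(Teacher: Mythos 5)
The paper supplies no argument for this lemma at all: it is quoted verbatim as Lemma 4.4 of \cite{larson2019}, and your central observation --- that the constants $A_{i_1,\dots,i_{2m-2}}$ are determined by the universal subdiscriminant polynomial $D_{d,m}$ together with the binomial weights alone, independently of $n$ and of the sequence $\pl$, so that the hypothesis $n>\frac{2}{\sqrt{3A}\varepsilon^3}$ is irrelevant to the value of the sum and the partition-function estimate transfers word for word --- is exactly the justification needed for that citation. Your first option therefore coincides with what the paper does, and that part is correct.

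Your attempted standalone reconstruction, however, has a genuine gap at precisely the point you flag. The submultiplicativity step giving $\binom{d}{m}\,2^{m(m-1)}$ controls the coefficient $\ell^1$-norm of $\Delta_m$ only in the monomial basis of the roots $\lambda_i$; to reach the constants $A_{\uu{i}}$ you must re-expand in the $a_j$ (equivalently the elementary symmetric functions), and the assertion that this change of basis inflates the $\ell^1$-norm by at most a $d^{O(m)}$ factor is essentially the whole content of the lemma and is nowhere established. The transition coefficients from the monomial-symmetric to the elementary-symmetric basis (inverse Kostka numbers) carry signs and are not uniformly small, and an $\ell^1$ bound cannot exploit cancellation, so this step requires an actual argument --- for instance working with the Hankel determinant of power sums via Newton--Girard, as in \cite{larson2019}, rather than with the root-difference product. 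The same applies to the reduction of the prefactor from $a_d^{m(m-1)}$ to $a_d^{2m-2}$, which you note as needing a check but do not carry out. None of this affects the paper, since citing \cite{larson2019} is legitimate here; but as a self-contained proof the proposal is incomplete.
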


Because the limiting behavior of $J_{\pl}^{d,n}(x)$ is modeled by Hermite polynomials, we need the following lemma.
\begin{lemma}[\cite{larson2019}, Lemma 4.5]\label{lem:hermit-polynomial} 
	For each $m\leq d$, we have that $\Delta_m(H_d(x))\geq 1$.
\end{lemma}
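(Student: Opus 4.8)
The plan is to exploit the fact that $\Delta_m(H_d(x))$ depends only on the roots $\lambda_1,\dots,\lambda_d$ of $H_d$, and to show that this quantity is a \emph{positive integer}; since every positive integer is at least $1$, this yields the claim immediately. The argument therefore splits into two independent inputs: positivity and integrality.

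Positivity is the easy half. By the determinant--Vandermonde identity recorded in the definition of $\Delta_m$,
\[
	\Delta_m(H_d(x))=\sum_{i_1<i_2<\cdots<i_m}\prod_{a<b}(\lambda_{i_a}-\lambda_{i_b})^2.
\]
The Hermite polynomial $H_d$ has $d$ distinct real roots, so for each $m\le d$ there is at least one admissible index set $i_1<\cdots<i_m$, and every summand is a product of squares of nonzero real numbers. Hence $\Delta_m(H_d(x))>0$.

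The crux is integrality, and this is where I expect the only real care to be needed. Here I would use that, in the normalization under which the rescaled Jensen polynomials converge to $H_d$ (the normalization of Griffin et al.\ and Larson--Wagner, consistent with the limiting identity established above), the Hermite polynomials are \emph{monic with integer coefficients}. Indeed, writing $\sum_{d\ge0}H_d(X)t^d/d!=e^{-t^2+Xt}$ gives
\[
	H_d(X)=\sum_{m}(-1)^m\,\frac{d!}{m!\,(d-2m)!}\,X^{d-2m},
\]
whose leading coefficient is $1$ and whose remaining coefficients $\frac{d!}{m!(d-2m)!}=\binom{d}{2m}\frac{(2m)!}{m!}$ are integers. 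Consequently the elementary symmetric functions $e_j(\lambda_1,\dots,\lambda_d)$ are, up to sign, the integer coefficients of $H_d$. Newton's identities $S_k=e_1S_{k-1}-e_2S_{k-2}+\cdots+(-1)^{k-1}k\,e_k$ then give each power sum $S_k=\sum_{i=1}^d\lambda_i^k$ as an integer once one knows the previous ones, so by induction every $S_k\in\Z$. Since the Hankel matrix defining $\Delta_m(H_d(x))$ has entries drawn from $S_0,S_1,\dots,S_{2m-2}\in\Z$, its determinant lies in $\Z$.

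Combining the two halves, $\Delta_m(H_d(x))$ is a positive integer, hence $\ge 1$, which is the assertion. This is exactly the argument of \cite[Lemma 4.5]{larson2019}, which transports verbatim to the present setting because the limiting Hermite polynomials arising here are identical to those in the partition case; the only point one must pin down is the choice of Hermite normalization, after which the appeal to Newton's identities and the integrality of Hankel determinants is routine.
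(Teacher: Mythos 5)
Your proof is correct and is essentially the argument of the cited Larson--Wagner Lemma 4.5, which the paper invokes without reproving: positivity comes from the distinctness of the real roots of $H_d$ via the Vandermonde expansion of $\Delta_m$, and integrality comes from the fact that the Hermite polynomials in the relevant normalization are monic with integer coefficients, so Newton's identities make every power sum $S_k$ (and hence the Hankel determinant) an integer. You also correctly pin down the one point that needs care, namely the choice of Hermite normalization $\sum_d H_d(X)t^d/d! = e^{-t^2+Xt}$.
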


\subsection{Proof of Theorem \ref{thm:main_theorem_general_case}}\label{sec:proof-of-general-case}
Suppose that $n\geq \frac{2}{\varepsilon\sqrt{27A}}$ so that $w\in[0,\varepsilon]$. By \eqref{eq:form-of-D_d,pl,m}, we have that
\begin{align*}
	\dfrac{\D_{d,\pl,m}(n)}{w^{2m(m-1)}}= & \sum_{i_1+\cdots + i_{2m-2}=2m(m-1)} \dfrac{A_{i_1,\cdots, i_{2m-2}}}{w^{2m(m-1)}}\cdot \prod_{k=1}^{2m-2}\binom{d}{i_k}\left( T_{d,\pl,m}(\uu{i};w)+E_{d,\pl,m}(\uu{i};w)w^{2m(m-1)+1} \right) \\
	= & \left(\dfrac{\sqrt{3A}}{2}\right)^{m(m-1)} \Delta_{m}(H_d(x))+w\cdot \mathcal{E}_{d,\pl,m}(w),
\end{align*}
where by Lemma \ref{lem:taylor-exp-of-ratios-of-planepartition}, \ref{lem:bound-on-binomial}, \ref{lem:bound-on-A-i} and the choice of $\varepsilon$, we have that
\begin{align*}
	\left(\dfrac{2}{\sqrt{3A}}\right)^{m(m-1)}\cdot & \left| \mathcal{E}_{d,\pl,m}(w)\right| \cdot w \leq d^{2d} 2^{d(d-1)}\cdot \left(\frac{4e}{\sqrt{3A}}\right)^{d(d-1)} \cdot \\
	&  2\cdot \left( e^{\frac{\Gamma(2d^2)}{(2\pi)^{2d^2+2}}}6^{2d-2} (6A)^r (r+1)(2d-2) \right)^{\frac{1}{3}} \left(0.1485\cdot 2^{12} (3A)^{3}\pi^3 \right)^{\frac{1}{3}}\cdot \varepsilon <1.
\end{align*}
Since $\Delta_{m}(H_d(x))\geq 1$, it follows that $\D_{d,\pl,m}(n)>0$ and therefore $J_{\pl}^{d,m}(x)$ is hyperbolic. We use \eqref{eq:formula-for-w-and-delta} to get the upper bound on $N_{\pl}(d)$.
\hfill $\square$

\subsection{Proof of Theorem \ref{thm:main_theorem_special case}}\label{sec:proof-of-special-case}
We now prove Theorem \ref{thm:main_theorem_special case} by bounding  the error terms that accumulate from approximating $\pl(n+j)/\pl(n)$ by the $(s-1)$th Taylor polynomial $A_{r,s}(j,w)$ of $R_r(j,w)$ using Lemma \ref{lem:key-lemma}, in the polynomial expression for $\D_{d,\pl,m}(n(w))$. This gives us that there exists an $\varepsilon$ such that $\D_{d,\pl,m}(n(w))\geq 0$ for all $w\in[0,\varepsilon]$ (i.e. $n\geq n_{\varepsilon}$)  which in turn allows us to reduce to checking finitely many cases.

Using the Newton-Girard identities to write the power sum of the roots in terms of elementary symmetric function, one can generate symbolic expressions for the polynomials $D_{d,m}(a_0,a_1,\cdots,a_d)$ in terms of $a_0,a_1,\cdots,a_d$. To obtain $\D_{d,\pl,m}(n)$, we substitute
\[
	\binom{d}{j}\left( A_{r,s}(j,w)+E_jw^s \right)
\]
in for $a_j$ in these polynomials, introducing $E_j$ as a variable.
For example, when $d=3, r=5$ and $s=10$, we have that
\[
	D_{3,2}(a_0,a_1,a_2,a_3) = 2 a_2^2-6 a_1 a_3.
\]
So we get that
\[
	\D_{3,\pl,2}(n)=18 \left(A_{5,10}(w)+E_2 w^{10}\right)^2-18 \left(A_{5,10}(w)+E_1 w^{10}\right) \left(A_{5,10}(w)+E_3 w^{10}\right).
\]
This gives rise to a polynomial expression in $w$ whose coefficients are polynomials in $E_j$. It turns out that all the coefficients less than $k=2m(m-1)$ vanish in the expression. So diving by $w^k$, gives an expression of the form
\begin{equation}\label{eq:special-case-poly-for-D-d,pl,m}
	\D_{d,\pl,m}(w)=c_0+c_1w+c_2(E_1,E_2,\cdots,E_d)w^2+\cdots + c_{(2m-2)s-k}(E_1,E_2,\cdots,E_d) w^{(2m-2)s-k},
\end{equation}
for each $2\leq m\leq d$, where $c_0$ and $c_1$ are positive constants.

We use Mathematica \cite{mathematica} to calculate the upper bound on $E_j=E_{r,s}(j,w)$ for $w\in [0,\varepsilon]$ using Lemma \ref{lem:key-lemma}, where we choose 
\begin{align*}
	& r=5,7,10,10,10 ,\, s=10,12,18,18,20 \text{ and }	\varepsilon = 0.051, 0.032, 0.06, 0.03,0.02\\ & \text{ for $d=3,4,5,6,7$ respectively.}
\end{align*}
With the help of Mathematica again, we minimize \eqref{eq:special-case-poly-for-D-d,pl,m} using these bounds and it turns out that in each case the minimum is positive for all $2\leq m\leq d$, which proves the hyperbolicity of $J_{\pl}^{d,n}(x)$ for $d=3,4,5,6$, and $7$ for all $n\geq n_{\varepsilon}$.

To get this $n_{\varepsilon}$, we use the condition that $w\leq \varepsilon$ and the relation between $w$ and $n$ given by \eqref{eq:formula-for-w-and-delta}. This gives us that $N_{\pl}(3)\leq 2647$, $N_{\pl}(4)\leq 10714, N_{\pl}(5)\leq 1626, N_{\pl}(6)\leq 13003$ and $N_{\pl}(7)\leq 43883$. Checking the finite number of remaining possible counter examples directly now proves the theorem. Annotated Mathematica code to implement the full procedure described above is at \cite{mathematicacode}.

\bibliographystyle{unsrt}
\bibliography{main}

\end{document}